\documentclass[a4paper]{article}

\usepackage[bottom=2in]{geometry} % full-width

\usepackage{amsmath}
\usepackage{amsthm}
\usepackage{amssymb}
\usepackage{color,todonotes}
\usepackage[utf8]{inputenc}
\usepackage{hyperref}
\hypersetup{
	unicode,
	pdfauthor={Author One, Author Two, Author Three},
	pdftitle={A simple article template},
	pdfsubject={A simple article template},
	pdfkeywords={article, template, simple},
	pdfproducer={LaTeX},
	pdfcreator={pdflatex}
}

\usepackage{graphicx}
\usepackage{subcaption}

\usepackage[sort&compress,numbers,square]{natbib}
\theoremstyle{plain}
\newtheorem{theorem}{Theorem}[section] %like this we have Theorem 3.2
\newtheorem{corollary}[theorem]{Corollary}
\newtheorem{lemma}[theorem]{Lemma}

\newtheorem{definition}[theorem]{Definition}
\newtheorem{example}[theorem]{Example}

\usepackage{booktabs}

\usepackage{multirow}

\usepackage[normalem]{ulem}

\usepackage{graphicx, color}
\graphicspath{{fig/}}

\usepackage{algorithm, algpseudocode} % use algorithm and algorithmicx for typesetting algorithms
\usepackage{mathrsfs} % for \mathscr command
\usepackage{bm}

\def\Ab{{\mathsf A}}
\def\Bb{{\mathsf B}}

\def\Eb{{\mathsf E}}
\def\Fb{{\mathsf F}}
\def\Gb{{\mathsf G}}

\def\Ib{{\mathsf I}}

\def\Nb{{\mathsf N}}
\def\Ob{{\mathsf O}}

\def\Qb{{\mathsf Q}}

\def\Sb{{\mathsf S}}

\def\Ub{{\mathsf U}}

\def\Wb{{\mathsf W}}

\def\Yb{{\mathsf Y}}

\def\T{^{\top}}

\def\aa{{\mathbf a}}

\def\cc{{\mathbf c}}

\def\e{{\mathbf e}}

\def\s{{\mathbf s}}

\def\u{{\mathbf u}}
\def\v{{\mathbf v}}

\def\x{{\mathbf x}}
\def\y{{\mathbf y}}
\def\z{{\mathbf z}}

\def\sSigma{{\bm \Sigma}}
\def\svec{\operatorname{svec}}
\def\PP{{\mathbb P}}

\newcommand{\norm}[1]{\left\lVert#1\right\rVert}

\usepackage{enumitem}% roman

\usepackage{bbm}

\iftrue
\newcommand{\comments}[1]{\footnote{\textcolor{blue}{\textit{#1}}}}

\else
\newcommand{\comments}[1]{}

\fi

\parskip=\medskipamount

\title{Distributionally robust chance-constrained standard quadratic optimization with Wasserstein ambiguity}
     
\author{Immanuel M. Bomze\thanks{Research Network Data Science and Faculty of Mathematics, University of Vienna, Oskar-Morgenstern-Platz 1, A-1090 Vienna, Austria. E-mail: immanuel.bomze@univie.ac.at} \and Daniel De Vicente\thanks{University of Vienna, Oskar-Morgenstern-Platz 1, A-1090 Vienna, Austria. E-mail: a12032143@unet.univie.ac.at }\and Simge Küçükyavuz \thanks{Department of Industrial Engineering and Management Sciences, Northwestern University, Evanston, IL 60208, USA. E-mail: simge@northwestern.edu}
}

\date{October 29, 2026}

\begin{document}
\maketitle
\begin{abstract} 
\noindent The standard quadratic optimization problem consists of minimizing a quadratic form over the standard simplex. It has highly relevant applications ranging from portfolio optimization to machine learning. However, if the quadratic form is neither convex nor concave, this problem is NP-hard. We study a distributionally robust chance-constrained variant whose ambiguity set is a Wasserstein ball around the sampled distribution and make four contributions. First, although the problem is nonconvex in the decision variable, we exploit its linear dependence on the uncertain matrix to reduce the problem to a mixed-integer quadratic optimization problem. Second, we show that a judicious choice of matrix norm---the entrywise maximum norm---makes this reformulation both exact and tractable, and we describe the induced worst-case distribution. Third, our formulation is, by construction, the minimization of the worst-case Value-at-Risk of the random quadratic form; this unifies it with existing frameworks for uncertain standard quadratic optimization problems, in particular under chance constraints, and we identify natural settings in which it collapses to a single deterministic instance. 
Finally, we report numerical experiments on positive-definite   and indefinite  instances which quantify how the model parameters drive the optimal value and solution time, and expose the computational cost of indefiniteness.
	\end{abstract}
\textbf{Keywords:} 
Stochastic optimization, Quadratic optimization, Distributionally robust chance constraints, Gaussian Orthogonal Ensemble, Wishart Ensemble, Portfolio optimization

\tableofcontents

%----------------------------------
%       1. Introduction 
% ---------------------------------

\section{Introduction}\label{sec:1.intro}

\subsection{Motivation and historical remarks}

\noindent The standard quadratic optimization problem (StQP) consists of minimizing a quadratic form over the standard simplex
\begin{equation}\label{StQP}\tag{StQP}
\ell(\Qb):=\min_{\x \in \Delta} \,\x^{\top}\Qb\x,
\end{equation}
where $\Delta:= \{\x \in \mathbb{R}^n_+: \e^{\top}\x = 1\}$ is the standard simplex in $\mathbb{R}^n$ and $\Qb$ is a symmetric $n\times n$ matrix (denoted by $\Qb\in \mathcal S^n$ in the sequel). Here $\e\in \mathbb{R}^n$ is the vector of all ones and ${}^{\top}$ denotes transposition.  The objective function is already in general form since any general quadratic objective function $q(\x):=\x^{\top}\Ab\x + 2\cc^{\top}\x$ can be written in homogeneous form  $q(\x):= \x^{\top}\Qb\x$ by defining  $\Qb:= \Ab + \cc\e^{\top} + \e\cc^{\top}\in \mathcal S^n$.

\noindent Even though the StQP seems rather easy—minimization of a quadratic function under linear constraints—it is NP-hard without assumptions on the definiteness of the matrix $\Qb$. {In~\cite{motzkin-straus-1965}, the authors} showed that the maximum clique problem, a well-known NP-hard problem, can be reduced to an StQP. Hence, the StQP is often regarded as one of the simplest hard problems~\cite{bomze2018complexity} because it has the simplest nonconvex objective, a quadratic form, and the simplest feasible set: a polytope. The StQP is a very flexible optimization class that allows for modelling of diverse problems such as portfolio optimization problems~\cite{markowitz-1952}, pairwise clustering~\cite{pavan2003dominant} and replicator dynamics~\cite{bomze1998standard}.

\noindent The only data required to fully characterize an StQP is the data matrix $\Qb$. However, in many applications the matrix $\Qb$ is uncertain. StQPs with uncertain data have been explored in the literature.

\noindent One of the most natural ways to handle uncertain objective functions is the worst-case approach, which assumes no information about the distribution of the data matrix $\Qb$ beyond its support. The concept of a robust standard quadratic optimization problem was introduced in~\cite{bomze2021trust} and formulated as a maximin problem. In that work, the authors investigated various uncertainty sets for the uncertain matrix and showed that the copositive relaxation gap coincides with the minimax gap. The authors also demonstrated that the robust StQP reduces to a deterministic StQP. 

\noindent {A setting in which the uncertain data matrix $\widetilde \Qb$ 
 contains a deterministic principal submatrix while the remaining entries are randomly distributed according to a known probability distribution was investigated in~\cite{bomze2022two}. In that model, the decision-maker makes a first decision before the uncertainty is revealed and a second decision afterward. The decision variable is therefore split into first- and second-stage components, and the resulting StQP is formulated as a two-stage stochastic optimization problem that does not admit a deterministic StQP counterpart.}       

\noindent The concept of a chance-constrained StQP was introduced in~\cite{bomze2025uncertain}, where it was formulated as the minimization of the Value-at-Risk of the random\footnote{Random parameters are denoted with a tilde sign.} variable $\x^{\top}\widetilde\Qb\x$. Under suitable distributional assumptions, the authors showed that this model is equivalent to a deterministic StQP.

\noindent The uncertain StQP has been tackled from a distributionally robust optimization (DRO) perspective in~\cite{HengArxiv}. There, the authors introduced the concept of a distributionally robust standard quadratic optimization problem, and established the reduction of the Wasserstein-DRO formulation to a robust optimization problem. Moreover, they unified three uncertainty models for the standard quadratic optimization problem—robust StQP, chance-constrained StQP, and distributionally robust StQP—and proved their equivalence under specific distributional assumptions.

\noindent The present paper  tackles the StQP with uncertain data matrix $\widetilde \Qb$ via distributionally robust chance-constrained optimization with Wasserstein ambiguity.  This optimization problem (DRCCP)  is the distributionally robust counterpart of the Value-at-Risk minimization underlying the chance-constrained StQP of~\cite{bomze2025uncertain}: minimizing the smallest threshold $t$ satisfying 
$$\operatorname{Probability}[\x^{\top}\widetilde\Qb\x \le t]\ge 1-\varepsilon$$ 
is precisely the minimization of the Value-at-Risk of $\x^{\top}\widetilde\Qb\x$ at level $1-\varepsilon$. We require this robustly over all probability distributions within a Wasserstein distance of the empirical distribution of the sampled matrices; that is, we minimize the \emph{worst-case} Value-at-Risk. This distributionally robust chance-constrained StQP (DRCCStQP) is formalized in Section~\ref{sec:3.DRCCStQP}, and its worst-case Value-at-Risk interpretation is established in Corollary~\ref{cor:DRCCStQP-is-supVaR}.

\noindent In a distributionally robust chance-constrained optimization problem, the ``true'' probability measure generating the random vector is unknown. The chance constraint must be satisfied for all probability measures in an ambiguity set. Different ambiguity sets have been explored, for instance, moment-based~\cite{calafiore2006distributionally, ghaoui2003worst} and $\phi$-divergence~\cite{jiang2016data} ambiguity sets. We will focus on Wasserstein ambiguity sets. The 1-Wasserstein distance was first incorporated into a distributionally robust optimization problem in~\cite{pflug2007ambiguity} and was further developed in~\cite{wozabal2012framework,pflug2014multistage}. Later,
\cite{mohajerin2018data,zhao2018data} showed that for distributionally robust optimization problems with a 1-Wasserstein ball centered at the empirical distribution, the worst-case distribution can be explicitly constructed. Strong duality results for distance-based cost functions on $p$-Wasserstein balls with $p\geq 1$ or general lower semi-continuous cost functions were established in~\cite{gao2023distributionally,blanchet2019quantifying}. The MIP reformulation of DRCCP with Wasserstein ambiguity was shown for linear safety sets in~\cite{xie2021distributionally,chen2024data} and strengthened in~\cite{ho2022distributionally}. The right-hand side uncertainty setting of~\cite{ho2022distributionally} was extended to left-hand side uncertainty--- closer to ours, since the uncertain data matrix enters the safety set coefficients rather than its right-hand side---with strong mixed-integer formulations in~\cite{honguyen2023strong}. For a broader survey of chance-constrained optimization under limited distributional information, covering both sampling-based and distributionally robust reformulations, see~\cite {kucukyavuz2022chance}.

\subsection{Notation, contributions and organization of the paper}

\subsubsection*{Norms and sets.}

\noindent An arbitrary norm on $\mathbb R^m$ is denoted by $\norm{\cdot}$ and its dual norm is denoted by $\norm{\cdot}^*$. The $\ell_q$-norms for $1\leq q \leq \infty$ are denoted by $\norm{\cdot}_q$. Later in the paper, we will define a  weighted $\ell_{\infty}$-norm with weight matrix $\Sb$, denoted by $\norm{\cdot}_{\infty,\Sb}$. The positive part of a number is denoted by $(\cdot)_+:=\max\{\cdot,0\}$ and the indicator function is denoted by $\mathbbm 1$. For a set $Z \subseteq \mathbb R^m$ we denote its complement by $Z^{\mathbf c}:=\mathbb R^m \setminus Z$. We define $\inf Z := \inf\{z:z\in Z\}$ and we use the convention that $\inf \emptyset = +\infty$. The special set $S(\x) \subseteq \mathbb R^m$, called the \emph{safety set}, depends on a vector $\x \in \mathbb R^n$; a point $\boldsymbol{\xi}\in S(\x)$ is called \emph{safe}, otherwise \emph{unsafe}. Given a distance function $d$ on $\mathbb R^m$, the distance of a point $\y \in \mathbb R^m$ to a set $Z \subseteq \mathbb R^m$ is defined as $\operatorname{dist}(\y,Z):=\inf_ {\z \in Z}d(\y,\z)$. If the distance is induced by a norm $d(\y,\z) = \norm{\y-\z}$, and $Z:=\{\z\in \mathbb R^m:\aa^{\top}\z \geq b\}$ is a closed halfspace of $\mathbb R^m$ where $\boldsymbol{0}\neq \aa \in \mathbb R^m$ and $b \in \mathbb R$, then the distance of $\y$ to $Z$ can be computed with the help of the dual norm ${\norm{\cdot}}^*$ as
\begin{equation}\label{ratio}
\operatorname{dist}(\y, Z)=\frac{(b-\aa^{\top}\y)_+}{\norm{\aa}^*}\,. 
\end{equation}

\subsubsection*{Probability measures and Wasserstein distance.}

\noindent Let $\mathcal{P}(\mathbb{R}^m)$ denote the set of all Borel probability measures on $\mathbb{R}^m$. Given a measurable function $T:\mathbb R^m \to \mathbb R^m$ and a probability measure $\mathbb P\in\mathcal P(\mathbb R^m)$, the push-forward measure $T_{\#}\mathbb P$ is also a probability measure on $\mathcal P(\mathbb R^m)$ and satisfies $T_{\#}\mathbb P[B] = \mathbb P[T^{-1}(B)]$ for all measurable $B \subseteq \mathbb R^m$. For a point $\boldsymbol{\xi} \in \mathbb{R}^m$, the Dirac measure located at $\boldsymbol{\xi}$ is denoted by $\delta_{\boldsymbol{\xi}}$. Let $N \in \mathbb{N}$ be the sample size and put $[N] := \{1,\dots,N\}$. Given a sample $\{\boldsymbol{\widehat{\xi}}_i\}_{i=1}^N \subset \mathbb{R}^m$, the empirical distribution is denoted by
\[
\widehat{\mathbb P}_N := \frac{1}{N}\sum_{i=1}^N \delta_{\boldsymbol{\widehat{\xi}}_i}\,.
\]
For $p\in [1,\infty)$ the $p$-Wasserstein distance between two probability distributions  $\{\mathbb P, \mathbb P^{\prime} \}\subset \mathcal{P}(\mathbb{R}^m)$ is defined by
\begin{equation}\label{eqn:Wasserstein-distance}
W_p(\PP, \PP^{\prime}):= \left(\underset{\pi \in \Pi(\PP, \PP^{\prime})}{\inf}\displaystyle\int_{\mathbb R^m\times \mathbb R^m} \,  \norm{\boldsymbol{\xi}-\boldsymbol{\xi^{\prime}}}^p\, \text{d}\pi (\boldsymbol{\xi},\boldsymbol{\xi^{\prime}})\right)^{1/p},   
\end{equation}
where $\Pi(\PP, \PP^{\prime})$ denotes the set of all joint probability distributions $\pi$ on $\mathbb R^m \times \mathbb R^m$ with marginals $\PP$ and $\PP^{\prime}$. In other words, $(\operatorname{proj}_1)_\#\pi = \PP$ and $(\operatorname{proj}_2)_\#\pi = \PP^{\prime}$, where $\operatorname{proj}_i$ denotes the projection onto the $i$-th component. The Wasserstein ambiguity set centered at $\mathbb P^{\prime}\in \mathcal P(\mathbb R^m)$ with radius $\theta >0$ is denoted by 
\begin{equation*}%\label{w-ambiguity-set}
 \mathbb B_{\theta,p} (\mathbb P^{\prime}):= \{\PP\in \mathcal P(\mathbb R^m) : W_p(\PP,\mathbb P^{\prime})\leq \theta \} \,.
\end{equation*}

\subsubsection*{Probability distributions and random matrices.}

\noindent We denote by $\mathcal N(\mu, \sigma^2)$ and $\mathcal N_m(\boldsymbol{\mu}, \boldsymbol{\Sigma})$ the univariate normal distribution with mean $\mu$ and variance $\sigma^2$, and the multivariate $m$-dimensional normal distribution with mean $\boldsymbol{\mu}$ and covariance matrix $\boldsymbol{\Sigma}$, respectively. The Gaussian Orthogonal Ensemble of dimension $n$ is denoted by $\operatorname{GOE}(n)$ and the Wishart Ensemble of dimension $n$ with covariance matrix $\boldsymbol{\Sigma}$ and $k$ degrees of freedom is denoted by $\mathcal W_n(\boldsymbol{\Sigma},k)$. 
\noindent The cumulative distribution function of the standard normal distribution is denoted by $\Phi$, and its inverse is denoted by $\Phi^{-1}$. For a gamma-distributed random variable with shape parameter $a$ and scale parameter $b$, we denote its cumulative distribution function by $F(t;a,b) = P(a,t/b)$, where $P(a,z) = \gamma(a,z)/\Gamma(a)$. Here, $\gamma(\cdot,\cdot)$ denotes the lower incomplete gamma function and $\Gamma(\cdot)$ denotes the gamma function. For $\alpha \in (0,1)$, the inverse cumulative distribution function of a gamma-distributed random variable is denoted by $F^{-1}(\alpha;a,b) = P^{-1}(a,\alpha)b$, where $P^{-1}$ denotes the inverse of the regularized lower incomplete gamma function with respect to its second argument.

\subsubsection*{Matrix notation.}

\noindent We denote by $\mathcal S^n, \mathcal S^n_+$ and $\mathcal S^n_{++}$ the set of symmetric, positive semidefinite and positive definite matrices of dimension $n$. For a real matrix $\Ab = (a_{ij}) \in \mathbb R^{n\times n}$ we denote the trace operator by $\operatorname{tr}(\Ab)$. The entrywise maximum norm and the entrywise $1$-norm are denoted by $\norm{\Ab}_{\mathrm{max}} := \max_{i,j} |a_{ij}|$ and $ \|\Ab\|_{1,1} := \sum_{i=1}^{n}\sum_{j=1}^n |a_{ij}|$, respectively. For a symmetric matrix $\Ab\in \mathcal S^{n}$ we denote the smallest and largest eigenvalue by $\lambda_{\mathrm{min}}(\Ab)$ and $\lambda_{\mathrm{max}}(\Ab)$, respectively; the symmetric vectorization of $\Ab \in \mathcal S^n$ is denoted by $\operatorname{svec}(\Ab)$. The symmetric vectorization operator $\operatorname{svec}: \mathcal S^n \to \mathbb R^m$ collects all entries on and above the diagonal of symmetric matrices in $\mathcal S^n$ into vectors in $\mathbb R^m$ with $m = \tfrac{n(n+1)}{2}$. For the ease of presentation, we will assume that for every symmetric matrix $\Ab = (a_{ij}) \in  \mathcal S^n$, the entries of its symmetric vectorization $\operatorname{svec}(\Ab)$ are arranged in the following way
$$
\operatorname{svec}(\Ab) = (a_{11},a_{22},\dots,a_{nn},\sqrt{2}a_{12},\sqrt{2}a_{13},\dots,\sqrt{2}a_{n-1,n})^{\top}\,,
$$
i.e., the first $n$ entries of $\operatorname{svec}(\Ab)$ correspond to the diagonal entries of $\Ab$ and the last \sloppy $\frac{n(n+1)}{2} - n = \frac{n(n-1)}{2}$ entries of $\operatorname{svec}(\Ab)$ correspond to the off-diagonal entries of $\Ab$ multiplied with the factor $\sqrt{2}$. For any $\Ab, \Bb \in \mathcal S^{n}$ we denote the Frobenius inner product by $\langle \Ab , \Bb \rangle_F:= \operatorname{tr}(\Ab^{\top}\Bb)$, and the Frobenius norm by $\|\Ab\|_F = \sqrt{\langle \Ab , \Ab \rangle_F}$. Since off-diagonal entries are multiplied by $\sqrt 2$, this mapping is an isometry in the sense that for symmetric matrices $\Ab, \Bb \in \mathcal S^n$:
$$
\langle \Ab , \Bb \rangle_F= \svec(\Ab)^{\top}\svec(\Bb)\,,
$$
where the latter denotes the standard inner product on $\mathbb R^m$. In particular, this mapping is a linear bijection. The identity matrix, the matrix of all ones and the zero matrix of appropriate dimensions are denoted by $\Ib$, $\Eb = \e\e^{\top}$ and $\Ob$, respectively. If the dimension needs to be made explicit, we indicate it, e.g.,  as $\Ib_n$.

\subsubsection*{Main contributions and organisation of this paper}

\noindent This paper
\begin{itemize}
\item introduces the distributionally robust chance-constrained standard quadratic optimization problem and establishes a reduction of  the Wasserstein-DRCCP formulation to a mixed-integer optimization problem;
\item derives tractable deterministic reformulations for the DRCCStQP with suitable matrix norms;
\item {casts the DRCCStQP as worst-case Value-at-Risk minimization---an equivalence that follows directly from its epigraph form---which places it within, and unifies it with, existing frameworks for uncertain StQPs, in particular the chance-constrained StQP;}  
\item investigates different parameter combinations, explains the relationships between them, the optimal value, and running time.
\end{itemize}

\noindent The article is organized as follows. In Section~\ref{sec:2.Basics}, we review the distributionally robust chance-constrained optimization problem (DRCCP) and discuss well-known results for the Wasserstein ambiguity set. We present the structure of the worst-case distribution and the mixed-integer quadratic programming (MIQP) reformulation of the DRCCP. Section~\ref{sec:3.DRCCStQP} defines the distributionally robust chance-constrained standard quadratic optimization problem (DRCCStQP). {Building on the worst-case Value-at-Risk objective set up in Section~\ref{sec:2.Basics}, we present the DRCCStQP as its instance for the loss $\x^{\top}\widetilde\Qb\x$.} Although the DRCCStQP is nonconvex in the decision variable, it is linear in the uncertain matrix, which enables a tractable robust reformulation. We apply results from Section~\ref{sec:2.Basics} and investigate suitable norms that make the resulting MIQP tractable. We choose the entrywise 1-norm and discuss constructing the worst-case distribution after solving the MIQP. We also present popular distributional models for random symmetric matrices. Those models (Wishart Ensemble and Gaussian Orthogonal Ensemble) will serve as the ``ground truth'' for the numerical experiments in the next section. In Section~\ref{sec:4.NumericalExperiments}, we run numerical experiments on positive-definite instances (Wishart Ensemble) and indefinite instances (Gaussian Orthogonal Ensemble). We show how different model parameters affect the optimal value and the running time.

%----------------------------------
%       2. Basics in DRCCP 
% ---------------------------------

\section{Distributionally robust chance-constrained optimization}\label{sec:2.Basics}

\subsection{Basics}

\noindent In a chance-constrained optimization problem (CCP)
\begin{equation}\label{CCP}\tag{CCP}
\begin{aligned}
\underset{\x \in \mathcal X}{\min} &&& \cc^{\top}\x\\
\mathrm{s.t.} &&& \mathbb P_{\mathrm{true}} [\boldsymbol{\tilde \xi} \notin S(\x)] \leq \varepsilon\,, 
\end{aligned}
\end{equation}
the goal is to minimize a deterministic linear function, where $\cc \in \mathbb R^n$ is a cost vector and $\x \in \mathcal X \subseteq \mathbb R^n$ is the decision variable from a feasible set $\mathcal X$, under chance constraints. The real-valued random vector $\boldsymbol{\tilde \xi} \in \mathbb R^m$ follows a known distribution $\mathbb P_{\mathrm{true}}$ and should fall outside a safety set $S(\x) \subseteq \mathbb R^m$ with probability at most $\varepsilon$, where $\varepsilon \in (0,1)$ is a given risk tolerance. When the distribution $\mathbb P_{\mathrm{true}}$ is unknown but there is access to an i.i.d. sample $\{\widehat{\boldsymbol{\xi}_i}\}_{i=1}^N$ from $\mathbb P_{\mathrm{true}}$, the \eqref{CCP} can be approximated by the sample average approximation \eqref{SAA}

\begin{equation}\label{SAA}\tag{SAA}
\begin{aligned}
\underset{\x \in \mathcal X}{\min} &&& \cc^{\top}\x\\
\mathrm{s.t.} &&& \frac{1}{N}\sum_{i=1}^N \mathbbm{1}_{\{\widehat{\boldsymbol{\xi}}_i\notin S(\x)\}} \leq \varepsilon\,, 
\end{aligned}
\end{equation}

\noindent where $\mathbbm{1}$ is an indicator function. This formulation replaces the distribution $\mathbb P_{\mathrm{true}}$ with the empirical distribution $\widehat{\mathbb P}_N$. The \eqref{SAA} is known to be very sensitive to the sample. As a way to robustify the empirical distribution, in a distributionally robust chance-constrained optimization problem %(DRCCP)
\begin{equation}\label{DRCCP}\tag{DRCCP}
\begin{aligned}
\underset{\x \in \mathcal X}{\min} &&& \cc^{\top}\x\\
\mathrm{s.t.} &&&
\sup_{\mathbb P \in \mathcal D} \mathbb P [\boldsymbol{\tilde\xi}\notin S(\x)]\leq \varepsilon\,,
\end{aligned}
\end{equation}
the chance constraint must be satisfied for all probability distributions contained in an ambiguity set $\mathcal D$ of distributions on $\mathbb R^m$ that also contains the empirical distribution. In this paper, we will assume that the decision maker has access to an i.i.d. sample $\{\boldsymbol{\widehat{\xi}}_i\}_{i=1}^N$ from $\mathbb P_{\mathrm{true}}$ and will focus on the choice $\mathcal D = \mathbb B_{\theta, 1}(\widehat{\mathbb P}_N)$, which consists of all probability measures within radius $\theta$ of the empirical measure, where the distance is given by the $1$-Wasserstein distance. 

\noindent For the rest of this section, we will review some useful and well-known results on \eqref{DRCCP} with Wasserstein ambiguity set $\mathcal D = \mathbb B_{\theta, 1}(\widehat{\mathbb P}_N)$. Let us recapitulate well-known results from~\cite{chen2024data}, which were already proven in~\cite{blanchet2019quantifying, gao2023distributionally}. The worst-case distribution of the uncertainty quantification problem with a Wasserstein ambiguity set centered at the empirical distribution supported on $N$ points is attained at a discrete distribution supported on at most $N+1$ points.

\begin{theorem}\label{thm:worst-case-distribution}(\cite[Theorem~1]{chen2024data}). Let $p = 1$ and $S \subseteq \mathbb R^m$ be an open set. Assume that the sample $\{\widehat{\boldsymbol{\xi}}_i\}_{i=1}^N$ is ordered in increasing distance to $S^{\mathbf c}$, i.e., 
$\operatorname{dist}(\widehat{\boldsymbol{\xi}}_i,S^{\mathbf c}) \leq  \operatorname{dist}(\widehat{\boldsymbol{\xi}}_j,S^{\mathbf c})$, for $1\leq i \leq j \leq N$ and put $i^* = \max\{i \in [N]: \operatorname{dist}(\widehat{\boldsymbol{\xi}}_i,S^{\mathbf c}) = 0\}$. Let $\boldsymbol{\xi}_i^* \in S^{\mathbf c}$ be a point closest to $\widehat{\boldsymbol{\xi}}_i$ in terms of the distance function $d$. Moreover, let $j^*:=\max\{j \in [N]\cup\{0\}: \sum_{i=1}^j \operatorname{dist}(\widehat{\boldsymbol{\xi}_i}, S^{\mathbf c}) \leq \theta N\}$. Then, the uncertainty quantification problem 
\begin{equation*}
\sup_{\mathbb P \in \mathbb B_{\theta,1}(\widehat{\mathbb P}_N)} \mathbb P[\boldsymbol{\tilde\xi} \notin S] 
\end{equation*}
is attained for the following worst-case distribution $\mathbb P^* \in \mathbb B_{\theta,1}(\widehat{\mathbb P}_N)$.
\begin{enumerate}[label=(\roman*)]
    \item If $j^*=N$, then $\displaystyle\sup_{\mathbb P \in \mathbb B_{\theta,1}(\widehat{\mathbb P}_N)} \mathbb P[\boldsymbol{\tilde\xi} \notin S] = \mathbb P^*[\boldsymbol{\tilde\xi} \notin S] = 1$ for 
$$
 \mathbb P^* = \frac{1}{N}\sum_{i=1}^{i^*} \delta_{\widehat{\boldsymbol{\xi}}_i} + \frac{1}{N}\sum_{i=i^*+1}^N\delta_{\boldsymbol{\xi}_i^*}\,. 
$$
    \item If $j^*<N$, then $\displaystyle\sup_{\mathbb P \in \mathbb B_{\theta,1}(\widehat{\mathbb P}_N)} \mathbb P[\boldsymbol{\tilde\xi} \notin S] = \mathbb P^*[\boldsymbol{\tilde\xi} \notin S] = \frac{j^* + k^*}{N}$ for 
$$
\begin{aligned}
 \mathbb P^* &=  \underbrace{\frac{1}{N}\sum_{i=1}^{i^*}\delta_{\widehat{\boldsymbol{\xi}}_i}}_{\begin{aligned}
&\text{already unsafe points},\\
&\text{no transport}
\end{aligned}} + \underbrace{\frac{1}{N}\sum_{i=i^*+1}^{j^*}\delta_{\boldsymbol{\xi}_i^*}}_{\begin{aligned}
&\text{closest safe points,}\\
&\text{transport}
\end{aligned}}\\
&+ \underbrace{\frac{k^*}{N}\delta_{\boldsymbol{\xi}_{j^*+1}^*} + \frac{1-k^*}{N}\delta_{\widehat{\boldsymbol{\xi}}_{j^*+1}}}_{\begin{aligned}
&\text{last closest safe point,}\\
&\text{mass split}
\end{aligned}} +  \underbrace{\frac{1}{N}\sum_{i=j^*+2}^N\delta_{\widehat{\boldsymbol{\xi}}_i}}_{\begin{aligned}
&\text{far away safe points},\\
&\text{no transport}
\end{aligned}}\,,
\end{aligned}
$$
where $k^* = (\theta N - \sum_{i=1}^{j^*}\operatorname{dist}(\widehat{\boldsymbol{\xi}_i},S^{\mathbf c}))/\operatorname{dist}(\widehat{\boldsymbol{\xi}}_{j^*+1},S^{\mathbf c})$.
\end{enumerate}
\end{theorem}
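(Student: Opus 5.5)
The argument has two halves: an upper bound on $\mathbb P[\boldsymbol{\tilde\xi}\notin S]$ over the whole ball, obtained by relaxing to a fractional knapsack over the sample points, and a matching lower bound from the explicit $\mathbb P^*$.

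Write $d_i:=\operatorname{dist}(\widehat{\boldsymbol{\xi}}_i,S^{\mathbf c})$, so $d_1\le\dots\le d_N$ and $d_i=0$ exactly for $i\le i^*$. For $i\le i^*$ we have $\widehat{\boldsymbol{\xi}}_i\in \overline{S^{\mathbf c}}=S^{\mathbf c}$, because $S$ is open and hence $S^{\mathbf c}$ is closed.

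\emph{Upper bound.} Fix $\mathbb P\in\mathbb B_{\theta,1}(\widehat{\mathbb P}_N)$ and a coupling $\pi$ whose cost is at most $\theta+\eta$ for some $\eta>0$. The coupling disintegrates as $\pi=\frac1N\sum_i \delta_{\widehat{\boldsymbol{\xi}}_i}\otimes \mathbb Q_i$ with $\mathbb P=\frac1N\sum_i\mathbb Q_i$. Put $p_i:=\mathbb Q_i[S^{\mathbf c}]\in[0,1]$. Every $\boldsymbol\xi\in S^{\mathbf c}$ satisfies $\|\boldsymbol\xi-\widehat{\boldsymbol{\xi}}_i\|\ge d_i$, so
\[
\frac1N\sum_i p_i d_i\le \int \|\boldsymbol{\xi}-\boldsymbol{\xi}'\|\,\mathrm d\pi\le\theta+\eta .
\]
It follows that $\mathbb P[S^{\mathbf c}]=\frac1N\sum_ip_i$ is at most the value of the fractional knapsack
\[
\max\Bigl\{\tfrac1N\textstyle\sum_i p_i:\ 0\le p_i\le1,\ \sum_i p_id_i\le N(\theta+\eta)\Bigr\}.
\]
The greedy rule (fill items in increasing order of $d_i$) is optimal for this LP, which can be shown by a standard exchange argument or via LP duality. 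Greedy sets $p_i=1$ for $i\le j^*$, gives item $j^*+1$ the fraction $k^*$ (computed with $\theta+\eta$ in place of $\theta$), and sets the remaining $p_i$ to zero. The value $(j^*+k^*)/N$ is continuous in the budget, so letting $\eta\downarrow0$ yields the bound $(j^*+k^*)/N$, or $1$ when $j^*=N$.

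One boundary point needs care: $k^*$ is defined only when $d_{j^*+1}>0$. This holds automatically, since $d_i=0$ for $i\le i^*$ forces $j^*\ge i^*$, and then maximality of $j^*$ gives $d_{j^*+1}>0$ whenever $j^*<N$.

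\emph{Attainment.} For the lower bound I check directly that $\mathbb P^*$ is feasible and achieves the bound. The nearest points $\boldsymbol\xi_i^*$ exist because $S^{\mathbf c}$ is closed and nonempty and $\mathbb R^m$ is finite-dimensional. The plan is to exhibit the obvious coupling: move mass $1/N$ from $\widehat{\boldsymbol{\xi}}_i$ to $\boldsymbol\xi_i^*$ for $i^*<i\le j^*$, move mass $k^*/N$ from $\widehat{\boldsymbol{\xi}}_{j^*+1}$ to $\boldsymbol\xi^*_{j^*+1}$, and leave everything else in place. Its cost is
\[
\frac1N\Bigl(\sum_{i\le j^*}d_i+k^*d_{j^*+1}\Bigr)=\theta,
\]
so $W_1(\mathbb P^*,\widehat{\mathbb P}_N)\le\theta$. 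In case (i) the cost is $\frac1N\sum_i d_i\le\theta$.

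It remains to compute $\mathbb P^*[S^{\mathbf c}]$. Its mass on $S^{\mathbf c}$ is at least $(j^*+k^*)/N$, because the points $\widehat{\boldsymbol\xi}_i$ for $i\le i^*$ and all the $\boldsymbol\xi^*_i$ lie in $S^{\mathbf c}$. The untransported points $\widehat{\boldsymbol\xi}_i$ with $i>j^*$ have $d_i>0$ and therefore lie in $S$. So the mass on $S^{\mathbf c}$ is exactly $(j^*+k^*)/N$, which matches the upper bound, and the supremum is attained.

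The main obstacle is the upper bound, specifically reducing an arbitrary $\mathbb P$ in the ball to the scalar variables $p_i$ through the disintegration of the coupling. Two details there need attention: the infimum in the $W_1$ definition need not be attained, which is handled by the $\eta$-slack above, and the greedy solution must be shown optimal for the knapsack relaxation.
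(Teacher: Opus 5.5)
Your proof is correct. The paper gives no proof of its own here: the statement is quoted from \cite[Theorem~1]{chen2024data}, and the paper attributes it to the Wasserstein strong-duality results of \cite{blanchet2019quantifying,gao2023distributionally}. On that route, with your $d_i:=\operatorname{dist}(\widehat{\boldsymbol\xi}_i,S^{\mathbf c})$, one first computes $\sup_{\boldsymbol\xi}\{\mathbbm 1_{S^{\mathbf c}}(\boldsymbol\xi)-\lambda\norm{\boldsymbol\xi-\widehat{\boldsymbol\xi}_i}\}=(1-\lambda d_i)_+$. This gives
\[
\sup_{\mathbb P\in\mathbb B_{\theta,1}(\widehat{\mathbb P}_N)}\mathbb P[\boldsymbol{\tilde\xi}\notin S]=\min_{\lambda\ge 0}\Bigl\{\lambda\theta+\frac1N\sum_{i=1}^N(1-\lambda d_i)_+\Bigr\}.
\]
In case (ii) the minimizer is $\lambda^*=1/d_{j^*+1}$. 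The worst case is then read off from the structure of the dual optimizers: each atom is sent to a maximizer of $\mathbbm 1_{S^{\mathbf c}}(\boldsymbol\xi)-\lambda^*\norm{\boldsymbol\xi-\widehat{\boldsymbol\xi}_i}$, and at most one atom is split.

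You instead stay entirely on the primal side. Disintegrating a near-optimal coupling over the atoms of $\widehat{\mathbb P}_N$, and using $\norm{\boldsymbol\xi-\widehat{\boldsymbol\xi}_i}\ge d_i$ on $S^{\mathbf c}$, compresses every $\mathbb P$ in the ball into a feasible point of a fractional knapsack. The explicit coupling then shows the greedy value is attained.

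\emph{What each route buys.} Your route is self-contained and elementary; no infinite-dimensional duality theorem is needed. It does rely on the loss being an indicator, so that each conditional $\mathbb Q_i$ is summarized by the single number $p_i$. The duality route covers general upper semicontinuous losses. Your argument also extends verbatim to $p>1$, using weights $d_i^p$ and budget $N\theta^p$.

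\emph{Where they meet.} The display above is exactly the LP dual of your knapsack, with $\mu_i=(1-\lambda d_i)_+$. Substitute $\gamma=1/\lambda$ and $s_i=\mu_i/\lambda$; here $\lambda>0$ is forced whenever the value is at most $\varepsilon<1$. Then ``knapsack value $\le\varepsilon$'' becomes $\theta N+\e^{\top}\s\le\varepsilon N\gamma$ and $d_i\ge\gamma-s_i$. This is precisely the reformulation the paper quotes next from \cite[Theorem~3]{chen2024data}. So your primal proof, combined with finite-dimensional LP duality, also delivers that result.
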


\noindent Problem~\eqref{DRCCP} with a Wasserstein ambiguity set can be reformulated as a deterministic optimization problem. 

\begin{theorem}(\cite[Theorem~3]{chen2024data}). Let $p = 1$, then the distributionally robust chance-constrained problem \eqref{DRCCP} with Wasserstein ambiguity set $\mathcal D = \mathbb B_{\theta, 1}(\widehat{\mathbb P}_N)$ is equivalent to the following optimization problem:
\begin{equation*}
\begin{aligned}
\underset{\x, \gamma, \s}{\min} &&& \cc^{\top}\x&&&\\
\mathrm{s.t.} &&& \theta N + \e^{\top}\s \leq \varepsilon N \gamma&&&\\
&&& \operatorname{dist}(\widehat{\boldsymbol{\xi}}_i, S^{\mathbf c}(\x)) \geq \gamma - s_i&&& \forall i \in [N]\\
&&& \x \in \mathcal X, \gamma \in \mathbb R_+, \s \in \mathbb R_+^N\,.&&&
\end{aligned}
\end{equation*}
\end{theorem}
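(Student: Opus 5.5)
The plan is to reduce the chance constraint, for each fixed $\x\in\mathcal X$, to a statement about the worst-case probability computed in Theorem~\ref{thm:worst-case-distribution}, and then to encode that value by the auxiliary variables $\gamma,\s$. Fix $\x$ and set $d_i:=\operatorname{dist}(\widehat{\boldsymbol{\xi}}_i,S^{\mathbf c}(\x))\ge 0$, with the sample reordered so that $d_1\le\dots\le d_N$. By Theorem~\ref{thm:worst-case-distribution} (with $S=S(\x)$ open), the worst-case violation probability equals $1$ if $\sum_i d_i\le\theta N$, and equals $(j^*+k^*)/N$ otherwise. I would first restate this as a fractional knapsack value: the supremum is
\[
\max\Bigl\{\tfrac1N\textstyle\sum_i z_i:\ \sum_i d_i z_i\le \theta N,\ z\in[0,1]^N\Bigr\},
\]
since greedily filling the cheapest $d_i$ first gives exactly $j^*$ full units plus the fraction $k^*$. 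Points with $d_i=0$ cost nothing and are always included, matching the $i^*$ already-unsafe points.

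Next, I would take the LP dual of this knapsack. With multiplier $\gamma\ge0$ on the budget constraint and $s_i\ge0$ on $z_i\le1$, the dual reads
\[
\min\Bigl\{\theta N\gamma+\textstyle\sum_i s_i:\ d_i\gamma+s_i\ge \tfrac1N,\ \gamma,s_i\ge0\Bigr\}.
\]
Strong duality holds because the primal is feasible ($z=0$) and bounded. The chance constraint, supremum $\le\varepsilon$, is therefore equivalent to the existence of $(\gamma,\s)\ge0$ satisfying the dual constraints with objective at most $\varepsilon$.

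Finally, I would rescale to reach the stated form. The target constraints are $\theta N+\e^\top\s\le\varepsilon N\gamma$ and $d_i\ge\gamma-s_i$. I would take the dual with $\gamma>0$, divide by $\gamma$, and rename $\gamma':=1/(N\gamma)$ and $s_i':=s_i/\gamma$. Then $d_i\gamma+s_i\ge 1/N$ becomes $d_i+s_i'\ge\gamma'$, and $\theta N\gamma+\sum s_i\le\varepsilon$ becomes $\theta N+\e^\top\s'\le \varepsilon N\gamma'$.

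This rescaling step is the one I expect to need care, because of the degenerate cases. If $\gamma=0$ in the dual, then all $s_i\ge1/N$, so $\sum s_i\ge1>\varepsilon$, which is infeasible. Hence $\gamma>0$ can always be assumed and the map is a bijection between feasible sets. Conversely, any $\gamma'$ satisfying the reformulated constraints is positive, since $\theta N>0$ forces $\varepsilon N\gamma'>0$, so the inverse map is well defined.

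I must also check the case where the supremum equals $1$. Then the constraint $\le\varepsilon<1$ fails, and the dual optimum is $1>\varepsilon$, so the reformulation is infeasible too, consistently. Equivalence for each $\x$ then shows that the two problems have the same feasible $\x$ and the same objective, which gives the claim.
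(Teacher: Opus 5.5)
Your argument is correct. The paper gives no proof of this statement: it is quoted from \cite{chen2024data}, with the remark that the underlying results go back to \cite{blanchet2019quantifying, gao2023distributionally}. The natural comparison is therefore with the standard duality route. That route starts from the strong-duality formula
\[
\sup_{\PP\in\mathbb B_{\theta,1}(\widehat{\PP}_N)}\PP[\boldsymbol{\tilde\xi}\notin S(\x)]=\min_{\lambda\ge 0}\Bigl\{\lambda\theta+\tfrac1N\textstyle\sum_{i=1}^N(1-\lambda d_i)_+\Bigr\}.
\]
It then excludes $\lambda=0$ (which gives the value $1>\varepsilon$), divides by $\lambda$, sets the new threshold variable to $1/\lambda$, and introduces epigraph variables $s_i\ge(1/\lambda-d_i)_+$.

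Your knapsack dual is exactly this formula once $s_i$ is eliminated and $\lambda$ is identified with $N$ times your budget multiplier. Your rescaling $\gamma'=1/(N\gamma)$ is the same substitution $\gamma'=1/\lambda$. What differs is how you obtain the dual: from the explicit worst-case distribution of Theorem~\ref{thm:worst-case-distribution} (via the greedy fractional-knapsack value) plus finite-dimensional LP duality, rather than from infinite-dimensional Wasserstein duality.

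\emph{What each route buys.} Yours is self-contained relative to what the paper has already stated. It also shows $(\gamma,\s)$ as rescaled multipliers of the transport budget and of the caps $z_i\le 1$. The price is that it is only as self-contained as the quoted Theorem~\ref{thm:worst-case-distribution}. It also inherits that theorem's hypothesis that $S(\x)$ is open, which you invoke although the statement here does not display it. The duality route needs no sorting or greedy argument.

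\emph{One edge case.} Dispatch $S^{\mathbf c}(\x)=\emptyset$ separately. There all $d_i=+\infty$ and your LP is not well defined. However, the chance constraint then holds trivially, and $(\gamma,\s)=(\theta/\varepsilon,\boldsymbol 0)$ is feasible in the reformulation, so the equivalence is immediate.
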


\noindent For the special case of a linear safety set given by an open halfspace, which we need later on, we obtain the following corollary, expressing the distance by a  ratio involving the dual norm as in~\eqref{ratio}.

\begin{corollary} Let $p = 1$ and $S(\x) = \{\boldsymbol{\xi}\in \mathbb R^m: \aa(\x)^{\top}\boldsymbol{\xi} < b(\x)\}$ with $\boldsymbol{0}\neq\aa(\x)\in \mathbb R^m$ for all $\x \in \mathcal X$ and $b(\x)\in \mathbb R$. Then, the distributionally robust chance-constrained problem \eqref{DRCCP} with Wasserstein ambiguity set $\mathcal D = \mathbb B_{\theta, 1}(\widehat{\mathbb P}_N)$ is equivalent to 
\begin{equation*}
\begin{aligned}
\underset{\x, \gamma, \s}{\min} &&& \cc^{\top}\x&&&\\
\mathrm{s.t.} &&& \theta N + \e^{\top}\s \leq \varepsilon N \gamma&&&\\
&&&\frac{(b(\x)-\aa(\x)^{\top}\widehat{\boldsymbol{\xi}}_i)_+}{\norm{\aa(\x)}^*} \geq \gamma - s_i&&& \forall i \in [N]\\
&&& \x \in \mathcal X, \gamma \in \mathbb R_+, \s \in \mathbb R_+^N\,.&&&
\end{aligned}
\end{equation*}
\end{corollary}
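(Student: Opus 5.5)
The corollary is a direct specialization of the preceding theorem (\cite[Theorem~3]{chen2024data}). That theorem already rewrites \eqref{DRCCP} with $\mathcal D=\mathbb B_{\theta,1}(\widehat{\mathbb P}_N)$ as a deterministic problem whose only nonexplicit ingredient is $\operatorname{dist}(\widehat{\boldsymbol{\xi}}_i,S^{\mathbf c}(\x))$. The plan is therefore to compute this distance explicitly for the given open halfspace and substitute it. The objective, the budget constraint $\theta N+\e^{\top}\s\le\varepsilon N\gamma$ and the domain constraints are untouched, so equivalence of the two problems follows once the distance terms agree for every fixed $\x\in\mathcal X$.

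First, fix $\x\in\mathcal X$ and identify the complement of the safety set. Since
$$S(\x)=\{\boldsymbol{\xi}:\aa(\x)^{\top}\boldsymbol{\xi}<b(\x)\},$$
its complement is the closed halfspace
$$S^{\mathbf c}(\x)=\{\boldsymbol{\xi}:\aa(\x)^{\top}\boldsymbol{\xi}\ge b(\x)\}.$$
Because $\aa(\x)\neq\boldsymbol 0$, this is exactly the set $Z$ in the notation section with $\aa=\aa(\x)$ and $b=b(\x)$. Here $d$ is the norm-induced distance $d(\y,\z)=\norm{\y-\z}$ used in the definition of $W_1$.

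Second, apply formula \eqref{ratio} to each sample point. This gives
$$\operatorname{dist}(\widehat{\boldsymbol{\xi}}_i,S^{\mathbf c}(\x))=\frac{(b(\x)-\aa(\x)^{\top}\widehat{\boldsymbol{\xi}}_i)_+}{\norm{\aa(\x)}^*}$$
for all $i\in[N]$. Substituting this into the constraints of the preceding theorem yields precisely the stated problem.

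The only step with real content is \eqref{ratio} itself, which the paper states as known. If it needed justification, I would argue by Hölder's inequality. For any $\z\in Z$,
$$b-\aa^{\top}\y\le \aa^{\top}(\z-\y)\le\norm{\aa}^*\norm{\z-\y},$$
which gives the lower bound on the distance. For the matching upper bound, choose $\u$ with $\norm{\u}=1$ and $\aa^{\top}\u=\norm{\aa}^*$; such a $\u$ exists by compactness of the unit sphere in finite dimensions. The point $\z=\y+\frac{(b-\aa^{\top}\y)_+}{\norm{\aa}^*}\u$ then lies in $Z$ and attains the bound. When $\y\in Z$ both sides are zero.
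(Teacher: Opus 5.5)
Your proposal is correct and follows the paper's route. The paper obtains the corollary by recognizing $S^{\mathbf c}(\x)$ as the closed halfspace $\{\boldsymbol{\xi}:\aa(\x)^{\top}\boldsymbol{\xi}\ge b(\x)\}$ and substituting the dual-norm distance formula \eqref{ratio} into the reformulation of \cite[Theorem~3]{chen2024data}; your H\"older-based check of \eqref{ratio} (lower bound from $\aa^{\top}(\z-\y)\le\norm{\aa}^*\norm{\z-\y}$, attainment via a norming vector $\u$) correctly fills in a step the paper takes as known.
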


\noindent We can eliminate the $(\cdot)_+$ operator by introducing binary variables and big-$M$ constraints. 

\begin{corollary}\label{cor:DRCCP-big-M} Let $p = 1$ and $S(\x) = \{\boldsymbol{\xi}\in \mathbb R^m: \aa(\x)^{\top}\boldsymbol{\xi} < b(\x)\}$ with $\aa(\x)\neq\boldsymbol{0}$ for all $\x \in \mathcal X$ and $b(\x)\in \mathbb R$. Then, the distributionally robust chance-constrained problem \eqref{DRCCP} with Wasserstein ambiguity set $\mathcal D = \mathbb B_{\theta, 1}(\widehat{\mathbb P}_N)$ is equivalent to 
\begin{equation*}
\begin{aligned}
\underset{\x, \gamma, \s, \u}{\min} &&& \cc^{\top}\x&&&\\
\mathrm{s.t.} &&& \theta N + \e^{\top}\s \leq \varepsilon N \gamma&&&\\
&&&\frac{b(\x)-\aa(\x)^{\top}\widehat{\boldsymbol{\xi}}_i}{\norm{\aa(\x)}^*} + Mu_i \geq \gamma - s_i&&& \forall i \in [N]\\
&&& M(1-u_i) \geq \gamma - s_i&&& \forall i \in [N]\\
&&& \x \in \mathcal X, \gamma \in \mathbb R_+, \s \in \mathbb R_+^N, \u \in \{0,1\}^N\,,&&&
\end{aligned}
\end{equation*}
where $M$ is a sufficiently large constant. 
\end{corollary}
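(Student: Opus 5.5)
We derive Corollary~\ref{cor:DRCCP-big-M} from the preceding corollary by linearising the positive part with one binary variable per sample. Fix $\x\in\mathcal X$ and write $r_i(\x):=(b(\x)-\aa(\x)^{\top}\widehat{\boldsymbol{\xi}}_i)/\norm{\aa(\x)}^*$. The previous corollary requires $(r_i(\x))_+\ge \gamma-s_i$ for all $i$. Since $(r)_+=\max\{r,0\}$, this is the disjunction ``$r_i(\x)\ge\gamma-s_i$ \emph{or} $0\ge \gamma-s_i$''. The plan is to show that, for fixed $(\x,\gamma,\s)$, this disjunction holds if and only if some $u_i\in\{0,1\}$ satisfies both big-$M$ constraints. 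Then the two feasible sets have the same projection onto $(\x,\gamma,\s)$, and the objective $\cc^{\top}\x$ does not depend on $\u$, so the problems are equivalent.

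The two directions are as follows.

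\emph{Big-$M$ feasible implies the previous corollary.} Suppose $(\x,\gamma,\s,\u)$ is feasible for the big-$M$ formulation.
\begin{itemize}
\item If $u_i=0$, the first big-$M$ constraint gives $r_i(\x)\ge\gamma-s_i$. Hence $(r_i(\x))_+\ge\gamma-s_i$.
\item If $u_i=1$, the second big-$M$ constraint gives $0\ge\gamma-s_i$. Hence $(r_i(\x))_+\ge 0\ge\gamma-s_i$.
\end{itemize}
This direction holds for any $M$.

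\emph{The previous corollary implies big-$M$ feasible.} Suppose $(\x,\gamma,\s)$ is feasible for the previous corollary.
\begin{itemize}
\item If $r_i(\x)\ge \gamma-s_i$, set $u_i=0$. The first constraint holds directly. The second reads $M\ge \gamma-s_i$, which needs $M\ge\gamma$.
\item Otherwise $r_i(\x)<\gamma-s_i$, and feasibility forces $\gamma-s_i\le 0$. Set $u_i=1$. The second constraint reads $0\ge\gamma-s_i$, which holds. The first reads $r_i(\x)+M\ge\gamma-s_i$, which needs $M\ge -r_i(\x)$.
\end{itemize}

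So the main obstacle is the meaning of ``sufficiently large'' $M$. It must dominate $\gamma$ and $-r_i(\x)$ over all relevant feasible points.

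First, $\gamma$ can be bounded without loss of optimality. If $\gamma$ exceeds $\bar\gamma:=\max_i (r_i(\x))_+$, replace $s_i$ by $s_i+(\gamma-\bar\gamma)$ where needed. This keeps the distance constraints satisfied, but it can break the budget constraint $\theta N+\e^{\top}\s\le\varepsilon N\gamma$. A cleaner choice is to assume $\mathcal X$ is compact and that $\aa$, $b$ are continuous with $\aa(\x)\neq\boldsymbol 0$ on $\mathcal X$. Then $\norm{\aa(\x)}^*$ is bounded away from zero, so $|r_i(\x)|\le R$ uniformly in $\x$ and $i$.

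Second, $\gamma$ can be capped at $R$ in the original formulation. If $\gamma>R$, then $\gamma-s_i\le (r_i)_+\le R$ forces $s_i\ge\gamma-R$ for every $i$. Lowering $\gamma$ to $\gamma'\ge R$ by some $\delta$ and each $s_i$ by $\delta$ preserves all distance constraints. The budget then changes by $-N\delta$ on the left and by $-\varepsilon N\delta$ on the right. Since $\varepsilon<1$, the left side decreases more, so the budget constraint is preserved.

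Hence one may add $\gamma\le R$ without changing the optimal value, and $M:=R$ (or $2R$ for safety) suffices. In the paper's later application to the standard simplex, the same argument yields an explicit $M$.
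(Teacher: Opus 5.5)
Your case analysis on $u_i$ is correct. It is the standard big-$M$ linearization of $(\cdot)_+$ that the paper invokes in one sentence without writing out a proof. The first direction holds for every $M$, and the second shows that you need $M\ge\gamma-s_i$ on indices with $u_i=0$ and $M\ge -r_i(\x)$ on indices with $u_i=1$.

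The weak spot is your treatment of ``sufficiently large''. Capping $\gamma$ is unnecessary: on an index with $u_i=0$ you already have $\gamma-s_i\le r_i(\x)$, so $M\ge\max_i|r_i(\x)|$ covers both branches at the given $\x$. More importantly, compactness of $\mathcal X$ and continuity of $\aa,b$ are hypotheses you add, and they fail exactly where the paper applies the corollary. In Theorem~\ref{thm:DRCCStQP-big-M-general-norm} the decision is $(\x,t)\in\Delta\times\mathbb R$ with $b(\x,t)=t$. Hence $r_i(\x,t)=(t-\x^{\top}\widehat\Qb_i\x)/\norm{\operatorname{svec}(\x\x^{\top})}^*$ is unbounded, and your closing claim that the same argument yields an explicit $M$ there does not go through as stated.

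The repair needs no compactness. Because the first direction holds for every $M$, the big-$M$ optimal value is never below that of the preceding corollary. If $(\x^*,\gamma^*,\s^*)$ is optimal there, then for $M\ge\max_i|r_i(\x^*)|$ your second direction makes $\x^*$ feasible for the big-$M$ model. So the optimal values coincide, and every big-$M$ optimal $\x$ is optimal for \eqref{DRCCP}.

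This also shows that ``sufficiently large'' must depend on the data and on $\theta/\varepsilon$. Summing $M(1-u_i)\ge\gamma-s_i$ and using the budget constraint together with $\gamma\ge\theta/\varepsilon$ forces $M\ge\theta/\varepsilon$ at any feasible big-$M$ point; this is Lemma~\ref{lem:subproblem}. Yet the DRCCStQP itself is feasible for every $\theta$ (take $t$ large). So the infeasibility observed there for $\theta>\varepsilon M$ signals a fixed $M$ that is no longer sufficiently large, not a property of the model.

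If you want a uniform $M$ in the StQP application, first confine $t$ to a bounded interval without loss of optimality, and then your compactness argument applies. For the lower end, every feasible point has $t\ge\min_i\x^{\top}\widehat\Qb_i\x$: otherwise all $r_i<0$, which forces $\gamma\le s_i$ for all $i$ and contradicts the budget constraint. For the upper end, $t$ may be bounded by the value of any feasible point.
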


\subsection{{Chance constraints as worst-case Value-at-Risk objectives}}\label{sec:2.wcVaR-role}

\noindent{In \eqref{CCP} and \eqref{DRCCP}, the chance constraint plays the role of a \emph{feasibility restriction}: it constrains the decision while a separate linear cost $\cc^{\top}\x$ is minimized. A chance constraint can equally serve as an \emph{objective}. Setting $\cc = \boldsymbol 0$ and lifting the threshold into the objective, the epigraphic problem $\inf\{t : \mathbb P[f(\x,\boldsymbol{\tilde\xi}) \le t] \ge 1-\varepsilon\}$ minimizes the $(1-\varepsilon)$-quantile of the loss $f(\x,\boldsymbol{\tilde\xi})$, that is, its Value-at-Risk; its distributionally robust version minimizes the worst-case Value-at-Risk over the ambiguity set $\mathcal D$. We record these two notions in general form here, so that the model of Section~\ref{sec:3.DRCCStQP} appears as the instance with loss $f(\x,\widetilde\Qb) = \x^{\top}\widetilde\Qb\x$, decision set $\mathcal X = \Delta$, and $\mathcal D = \mathbb B_{\theta,p}(\widehat{\mathbb P}_N)$.}

\begin{definition}
    Let $\tilde \zeta \in \mathbb R$ be a random variable with distribution $\mathbb P$ and $\varepsilon \in (0,1)$ be given, then the Value-at-Risk of $\tilde \zeta$  at level  $1-\varepsilon$ is defined as
$$
\operatorname{VaR}_{1-\varepsilon, \mathbb P}[\tilde\zeta] := \inf\{t\in \mathbb R: \mathbb P[\tilde\zeta\leq t] \geq 1-\varepsilon\}\,.
$$
If $f:\mathbb R^n \times \mathbb R^m \to \mathbb R$ is a random function that depends on a decision variable $\x \in \mathcal X\subseteq \mathbb R^n$ and on a random vector $\boldsymbol{\tilde \xi}\in \mathbb R^m$, then the problem of minimizing the Value-at-Risk of the random variable $\tilde \zeta = f(\x,\boldsymbol{\tilde \xi})$ with distribution $\mathbb P$ at level $1-\varepsilon$ is defined as
$$
\begin{array}{lcl}
\underset{\x \in \mathcal X}{\inf}\operatorname{VaR}_{1-\varepsilon, \mathbb P}[f(\x,\boldsymbol{\tilde \xi})] =& \underset{(\x,t) \in \mathcal X \times \mathbb R}{\inf} & t\\
&\operatorname{s.t.} & \mathbb P[f(\x, \boldsymbol{\tilde \xi})\leq t] \geq 1-\varepsilon\,.
\end{array}
$$
\end{definition}

\begin{definition}
    Let $\tilde \zeta \in \mathbb R$ be a random variable whose distribution is unknown but assumed to lie in an ambiguity set $\mathcal D$ of probability distributions. The {\em worst-case Value-at-Risk}~\cite{ghaoui2003worst} of $\tilde \zeta$ at level $1-\varepsilon$ is given by
$$
\operatorname{wcVaR}_{1-\varepsilon, \mathcal D}[\tilde \zeta] := \inf\{t \in \mathbb R: \mathbb P[\tilde \zeta \leq t] \geq 1-\varepsilon \quad \forall \, \mathbb P \in \mathcal D\}
$$
and the problem of minimizing the worst-case Value-at-Risk of the random variable $\tilde\zeta = f(\x,\boldsymbol{\tilde \xi})$ is given by
\begin{equation*}
\begin{array}{lclc}
\underset{\x \in \mathcal X}{\inf} \operatorname{wcVaR}_{1-\varepsilon, \mathcal D}[f(\x, \boldsymbol{\tilde \xi})]:=& \underset{(\x,t) \in \mathcal X \times \mathbb R}{\inf} & t&\\
&\text{s.t.} & \mathbb P[f(\x, \boldsymbol{\tilde \xi})\leq t] \geq 1-\varepsilon& \forall \, \mathbb P \in \mathcal D\,.
\end{array}
\end{equation*}
\end{definition}

\noindent {The following lemma characterizes the worst-case Value-at-Risk as}
$$
\operatorname{wcVaR}_{1-\varepsilon, \mathcal D}[\tilde \zeta] = \underset{\mathbb P \in \mathcal D}{\sup} \operatorname{VaR}_{1-\varepsilon, \mathbb P}[\tilde \zeta]\,.
$$

\begin{lemma}\label{lem:worst-case-is-sup}
    Let $\tilde \zeta \in \mathbb R$ be a random variable and let $\mathcal D \subset \mathcal P(\mathbb R)$ be an ambiguity set of probability measures. For any $\mathbb P \in \mathcal D$ and a given $\varepsilon\in (0,1)$ define the following sets:
$$
\mathfrak T_{\mathbb P}:=\{t\in \mathbb R: \mathbb P[\tilde \zeta \leq t]\geq 1-\varepsilon\}\,,\quad \mathfrak T:=\{t\in \mathbb R: \underset{\mathbb P \in \mathcal D}{\inf}\, \mathbb P[\tilde \zeta \leq t]\geq 1-\varepsilon\}\,.
$$
Then,
$$
\underset{\mathbb P\in \mathcal D}{\sup}\inf \mathfrak T_{\mathbb P} = \inf \mathfrak T\,.
$$
\end{lemma}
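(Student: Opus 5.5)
The plan is to show that $\mathfrak T$ is exactly the intersection of the sets $\mathfrak T_{\mathbb P}$, and that each of these sets is an upward-closed, closed half-line. Once that is in place, the infimum of an intersection of such half-lines is the supremum of their left endpoints.

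\emph{Step 1: $\mathfrak T=\bigcap_{\mathbb P\in\mathcal D}\mathfrak T_{\mathbb P}$.} We have $\inf_{\mathbb P\in\mathcal D}\mathbb P[\tilde\zeta\le t]\ge 1-\varepsilon$ if and only if $\mathbb P[\tilde\zeta\le t]\ge 1-\varepsilon$ for every $\mathbb P\in\mathcal D$. This is immediate from the definition of the infimum.

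\emph{Step 2: the structure of $\mathfrak T_{\mathbb P}$.} The map $t\mapsto F_{\mathbb P}(t):=\mathbb P[\tilde\zeta\le t]$ is nondecreasing, right-continuous, and tends to $0$ and $1$ at $\mp\infty$. Since $\varepsilon\in(0,1)$, the set $\mathfrak T_{\mathbb P}$ is nonempty, because $F_{\mathbb P}(t)\to 1>1-\varepsilon$. It is bounded below, because $F_{\mathbb P}(t)\to 0<1-\varepsilon$. It is upward closed by monotonicity. It contains its infimum $q_{\mathbb P}:=\inf\mathfrak T_{\mathbb P}\in\mathbb R$ by right-continuity. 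Hence $\mathfrak T_{\mathbb P}=[q_{\mathbb P},\infty)$.

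\emph{Step 3: conclude.} By Steps 1 and 2,
\[
\mathfrak T=\bigcap_{\mathbb P\in\mathcal D}[q_{\mathbb P},\infty)=\Bigl\{t\in\mathbb R: t\ge \sup_{\mathbb P\in\mathcal D} q_{\mathbb P}\Bigr\}.
\]
We argue by cases on $s:=\sup_{\mathbb P} q_{\mathbb P}$.
\begin{itemize}
\item If $s<\infty$, then $\mathfrak T=[s,\infty)$ and $\inf\mathfrak T=s$.
\item If $s=+\infty$, then $\mathfrak T=\emptyset$ and $\inf\mathfrak T=+\infty$ by the paper's convention, matching the left-hand side.
\end{itemize}
(If $\mathcal D$ were empty, both sides would need conventions; we assume $\mathcal D\neq\emptyset$ as is implicit.)

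The only delicate point is Step 2, in particular that $q_{\mathbb P}$ itself belongs to $\mathfrak T_{\mathbb P}$. Strictly speaking, though, the final identity only needs each $\mathfrak T_{\mathbb P}$ to be an up-set with infimum $q_{\mathbb P}$. The claim that $\bigcap_{\mathbb P}\mathfrak T_{\mathbb P}$ has infimum $\sup_{\mathbb P} q_{\mathbb P}$ then holds directly:
\begin{itemize}
\item Any $t\in\mathfrak T$ satisfies $t\ge q_{\mathbb P}$ for all $\mathbb P$, which gives $\inf\mathfrak T\ge s$.
\item Conversely, any $t>s$ lies above every $q_{\mathbb P}$. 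Since $\mathfrak T_{\mathbb P}$ is upward closed with infimum $q_{\mathbb P}<t$, it follows that $t\in\mathfrak T_{\mathbb P}$ for all $\mathbb P$, so $t\in\mathfrak T$. This gives $\inf\mathfrak T\le s$.
\end{itemize}
Thus right-continuity is used only for the finiteness and attainment remarks, not for the equality itself.
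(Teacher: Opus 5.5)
Your proof is correct and follows the same route as the paper: write $\mathfrak T=\bigcap_{\mathbb P\in\mathcal D}\mathfrak T_{\mathbb P}$, note that each $\mathfrak T_{\mathbb P}$ is the half-line $[\inf\mathfrak T_{\mathbb P},\infty)$, and conclude that the infimum of the intersection is the supremum of the left endpoints. Your version is more careful than the paper's one-line interval argument, since you justify the half-line structure via monotonicity and right-continuity of the distribution function and explicitly treat the case $\sup_{\mathbb P}\inf\mathfrak T_{\mathbb P}=+\infty$ (empty $\mathfrak T$), both of which the paper glosses over.
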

\begin{proof}
    Note that $\mathfrak T = \bigcap_{\mathbb P \in \mathcal D} \mathfrak T_{\mathbb P}$. Moreover, by construction $\mathfrak T$ and the $\mathfrak T_{\mathbb P}$ are intervals. In particular, $\mathfrak T = [\inf \mathfrak T,\infty)$ and $\mathfrak T_{\mathbb P} = [\inf\mathfrak T_{\mathbb P},\infty)$. The claim follows from
$$
[\inf \mathfrak T,\infty) = \mathfrak T = \bigcap_{\mathbb P \in \mathcal D} \mathfrak T_{\mathbb P} = \bigcap_{\mathbb P \in \mathcal D} [\inf\mathfrak T_{\mathbb P},\infty)= [\sup_{\mathbb P \in \mathcal D}\inf\mathfrak T_{\mathbb P},\infty)\,.
$$
\end{proof}

%----------------------------------
%       3. DRCCStQP
% ---------------------------------
\section{Distributionally robust chance-constrained StQP}\label{sec:3.DRCCStQP}

\subsection{Problem formulation and worst-case Value-at-Risk perspective}

\begin{definition}
    Let $\widetilde \Qb \in \mathcal S  ^n$ be a random symmetric matrix that follows an unknown distribution $\mathbb P_{\mathrm{true}}$ and let $\{\widehat{\Qb}_i\}_{i=1}^N$ be a sequence of independent and identically distributed samples drawn according to $\mathbb P_{\mathrm{true}}$. Let $\varepsilon \in (0,1)$, $\theta >0$, and $p \in [1,\infty)$ be given. Consider the reference (empirical) distribution
    $\widehat\PP_N = \frac{1}{N}\sum_{i=1}^N \delta_{\widehat{\Qb}_i}$
 generated by this sample, which in turn defines the ambiguity set $\mathbb B_{\theta,p} (\widehat\PP_N)$, the $p$-Wasserstein ball around $\widehat\PP_N$ with radius $\theta$ under the matrix norm~$\norm{\cdot}$. Then the Distributionally Robust chance-constrained Standard Quadratic Optimization Problem (DRCCStQP) is defined by the problem
\begin{equation}\label{DRCCStQP}\tag{DRCCStQP}
\begin{aligned}
\underset{(\x, t)\in \Delta \times \mathbb R}{\inf} &&& t\\
\mathrm{s.t.} &&& \inf_{\mathbb P \in \mathbb{B}_{\theta,p}(\widehat\PP_N)} \mathbb P [\x^{\top}\widetilde \Qb \x \leq t] \geq 1-\varepsilon\,.
\end{aligned}
\end{equation}
\end{definition}

\noindent{By construction, \eqref{DRCCStQP} is the instance of the worst-case Value-at-Risk objective introduced in Section~\ref{sec:2.wcVaR-role}, with loss $f(\x,\widetilde\Qb) = \x^{\top}\widetilde\Qb\x$, decision set $\mathcal X = \Delta$, and ambiguity set $\mathcal D = \mathbb B_{\theta,p}(\widehat{\mathbb P}_N)$. Specializing the characterization of Lemma~\ref{lem:worst-case-is-sup} to this instance yields the following equivalent form directly.}

\begin{corollary}\label{cor:DRCCStQP-is-supVaR} The \eqref{DRCCStQP} is equivalent to the following problem
$$
\inf_{\x\in \Delta} \sup_{\mathbb P\in \mathbb B_{\theta,p}(\widehat{\mathbb P}_N)} \operatorname{VaR}_{1-\varepsilon,\mathbb P}[\x^{\top}\widetilde\Qb\x]\,.
$$
\end{corollary}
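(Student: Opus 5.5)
The plan is to fix $\x\in\Delta$, apply Lemma~\ref{lem:worst-case-is-sup} pointwise in $\x$, and then take the infimum over $\x$. For fixed $\x$, let $\tilde\zeta_{\x}:=\x^{\top}\widetilde\Qb\x$. This is a real random variable, because $\Qb\mapsto\x^{\top}\Qb\x=\langle \x\x^{\top},\Qb\rangle_F$ is linear, hence continuous and Borel measurable on $\mathcal S^n$ (identified with $\mathbb R^m$ via $\svec$). Every $\mathbb P\in\mathbb B_{\theta,p}(\widehat{\mathbb P}_N)$ therefore induces the push-forward law of $\tilde\zeta_{\x}$ on $\mathbb R$. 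The lemma is stated for measures on $\mathbb R$, so I will apply it to the push-forward family $\mathcal D_{\x}:=\{(q_{\x})_{\#}\mathbb P:\mathbb P\in\mathbb B_{\theta,p}(\widehat{\mathbb P}_N)\}$ with $q_{\x}(\Qb)=\x^{\top}\Qb\x$. Since $\mathbb P[\tilde\zeta_{\x}\le t]=(q_{\x})_{\#}\mathbb P[(-\infty,t]]$, the sets $\mathfrak T_{\mathbb P}$ and $\mathfrak T$ are the same whichever description is used.

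\emph{Step 1.} Rewrite \eqref{DRCCStQP} as a nested infimum,
$$\inf_{\x\in\Delta}\ \inf\Bigl\{t\in\mathbb R:\ \inf_{\mathbb P\in\mathbb B_{\theta,p}(\widehat{\mathbb P}_N)}\mathbb P[\x^{\top}\widetilde\Qb\x\le t]\ge 1-\varepsilon\Bigr\}=\inf_{\x\in\Delta}\inf\mathfrak T(\x).$$
This holds because an infimum over the product $\Delta\times\mathbb R$ can be computed iteratively, and it remains valid with the convention $\inf\emptyset=+\infty$.

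\emph{Step 2.} Invoke Lemma~\ref{lem:worst-case-is-sup} for each fixed $\x$. This gives $\inf\mathfrak T(\x)=\sup_{\mathbb P}\inf\mathfrak T_{\mathbb P}(\x)=\sup_{\mathbb P}\operatorname{VaR}_{1-\varepsilon,\mathbb P}[\x^{\top}\widetilde\Qb\x]$. Substituting into Step 1 yields the claimed min–sup form.

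The only delicate point is the interval description used in the lemma's proof. For each $\mathbb P$, the set $\mathfrak T_{\mathbb P}$ is nonempty and closed: it is nonempty since the distribution function tends to $1>1-\varepsilon$, and closed since it is right-continuous and nondecreasing. The intersection $\mathfrak T$ is also a closed up-set, possibly empty. If it is empty, both sides equal $+\infty$, and the identity $\bigcap_{\mathbb P}[a_{\mathbb P},\infty)=[\sup a_{\mathbb P},\infty)$ still holds with these conventions. For the Wasserstein ball I expect $\mathfrak T\neq\emptyset$ anyway, since mass cannot escape to infinity at bounded transport cost, but this is not needed for the equivalence. I therefore expect no real obstacle beyond this bookkeeping of the extended-real conventions.
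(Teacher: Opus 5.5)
Your proposal is correct and follows the same route as the paper, which simply specializes Lemma~\ref{lem:worst-case-is-sup} to the loss $\x^{\top}\widetilde\Qb\x$ for each fixed $\x\in\Delta$ and then takes the infimum over $\Delta$. Your extra bookkeeping fills in details the paper leaves implicit: passing to the push-forward laws on $\mathbb R$, the nonemptiness and closedness of each $\mathfrak T_{\mathbb P}$, and the $\inf\emptyset=+\infty$ convention for the case that $\mathfrak T$ is empty, which the lemma's proof tacitly assumes is a nonempty closed half-line.
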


\noindent {In words, \eqref{DRCCStQP} minimizes the worst-case Value-at-Risk of $\x^{\top}\widetilde\Qb\x$ at level $1-\varepsilon$ over all distributions in the Wasserstein ball $\mathcal D = \mathbb B_{\theta,p}(\widehat{\mathbb P}_N)$; equivalently, it robustifies, around the empirical measure, the Value-at-Risk minimization that defines the chance-constrained StQP of~\cite{bomze2025uncertain}.}

\subsection{\texorpdfstring{Equivalent reformulation via big-$M$ constraints}{Equivalent reformulation via big-M constraints}}

\noindent As a consequence of Corollary~\ref{cor:DRCCP-big-M}, we obtain the following theorem.

\begin{theorem}\label{thm:DRCCStQP-big-M-general-norm} Let $p=1$ and $\norm{\cdot}$ be any norm on $\mathbb R^m$, then the \eqref{DRCCStQP} is equivalent to the following optimization problem, a mixed-integer nonlinear program:
\begin{subequations}\label{DRCCStQP-general-norm}
\begin{align}
\underset{\x, t, \gamma, \s, \u}{\min} &&& t&&&  \\
\mathrm{s.t.} &&& \theta N + \e^{\top}\s \leq \varepsilon N \gamma&&& \\
&&&\frac{t-\x^{\top}\widehat{\Qb}_i\x}{\norm{\operatorname{svec}(\x\x^{\top})}^*} + Mu_i \geq \gamma - s_i&&& \forall i \in [N] \label{constr:dual-norm}\\
&&& M(1-u_i) \geq \gamma - s_i&&& \forall i \in [N]\\
&&& \x \in \Delta, t\in \mathbb R, \gamma \in \mathbb R_+, \s \in \mathbb R_+^N, \u \in \{0,1\}^N\,,&&&
\end{align}
\end{subequations}
where $M$ is a sufficiently large constant. 
\end{theorem}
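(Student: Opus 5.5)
Via $\svec$, I will cast \eqref{DRCCStQP} as an instance of \eqref{DRCCP} with a linear safety set, and then apply Corollary~\ref{cor:DRCCP-big-M}. We identify $\widetilde\Qb$ with $\boldsymbol{\tilde\xi}:=\svec(\widetilde\Qb)\in\mathbb R^m$ and samples with $\widehat{\boldsymbol{\xi}}_i:=\svec(\widehat\Qb_i)$. Since $\svec$ is a linear bijection, the matrix norm on $\mathcal S^n$ is understood as a norm on $\mathbb R^m$ transported through $\svec$. Push-forwards under $\svec$ then map $\mathbb B_{\theta,1}(\widehat\PP_N)$ on $\mathcal S^n$ bijectively onto the corresponding Wasserstein ball on $\mathbb R^m$ centered at $\frac1N\sum_i\delta_{\widehat{\boldsymbol\xi}_i}$. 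By the Frobenius isometry,
\[
\x^{\top}\widetilde\Qb\x=\langle \x\x^{\top},\widetilde\Qb\rangle_F=\svec(\x\x^{\top})^{\top}\boldsymbol{\tilde\xi},
\]
which is linear in $\boldsymbol{\tilde\xi}$.

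Next I treat $(\x,t)\in\Delta\times\mathbb R$ as the decision, with $\mathcal X=\Delta\times\mathbb R$ and cost vector selecting $t$. I set $\aa(\x,t):=\svec(\x\x^{\top})$ and $b(\x,t):=t$. The constraint $\inf_{\PP}\PP[\x^{\top}\widetilde\Qb\x\le t]\ge1-\varepsilon$ is equivalent to $\sup_\PP\PP[\aa^{\top}\boldsymbol{\tilde\xi}>t]\le\varepsilon$. This is a constraint on the probability of leaving an open halfspace. The paper's Corollary uses the safety set $\{\aa^{\top}\boldsymbol\xi<b\}$, whose complement is $\{\aa^\top\boldsymbol\xi\ge b\}$, so its chance constraint reads $\sup\PP[\aa^\top\boldsymbol{\tilde\xi}\ge t]\le\varepsilon$. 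Ours is $\sup\PP[\aa^\top\boldsymbol{\tilde\xi}>t]\le\varepsilon$. Thus the two formulations differ only in whether the boundary $\{\aa^\top\boldsymbol\xi=t\}$ counts as unsafe. I also need $\aa\neq\boldsymbol 0$. This holds because $\x\in\Delta$ forces $\x\ne\boldsymbol0$, so $\x\x^{\top}\neq\Ob$.

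The main obstacle is the strict versus non-strict inequality. My plan is to show that it does not affect the optimal value, so that the problems are equivalent in the sense of equal infima. Fix $\x$. Let $t^*_>(\x)$ and $t^*_\ge(\x)$ be the smallest thresholds feasible under the two versions. Trivially $t^*_>\le t^*_\ge$. Conversely, if $t$ is feasible for the strict version, then for every $\eta>0$ the point $t+\eta$ is feasible for the non-strict version, since $\{\aa^\top\boldsymbol\xi\ge t+\eta\}\subseteq\{\aa^\top\boldsymbol\xi>t\}$. Hence both infima coincide, and the \emph{min} in the reformulation is attained. Attainment follows from continuity of the distance formula \eqref{ratio} in $t$: the constraints $\frac{(t-\aa^\top\widehat{\boldsymbol\xi}_i)_+}{\norm{\aa}^*}\ge\gamma-s_i$ define a closed set in $t$. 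Alternatively, one can redo Theorem~\ref{thm:worst-case-distribution}'s reasoning for a closed $S$. In that case the distance to the open set $S^{\mathbf c}$ is still given by \eqref{ratio}, and the worst case becomes a supremum; the only consequence is that the constraint is not attained.

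Finally I substitute into Corollary~\ref{cor:DRCCP-big-M}. The terms become $b-\aa^\top\widehat{\boldsymbol\xi}_i=t-\x^\top\widehat\Qb_i\x$ (again by the Frobenius isometry) and $\norm{\aa}^*=\norm{\svec(\x\x^\top)}^*$. This yields exactly \eqref{DRCCStQP-general-norm}, with the $(\cdot)_+$ removed by the binaries $u_i$ and a sufficiently large $M$. The existence of a valid $M$ needs a bound on $\gamma$. Such a bound holds because the optimal $t$ can be bounded using $\x\in\Delta$ and the samples, and $\norm{\svec(\x\x^\top)}^*$ is bounded away from zero on the compact set $\Delta$.
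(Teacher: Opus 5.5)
Your proposal is correct and follows the paper's proof. Like the paper, you identify $\widetilde\Qb$ with $\svec(\widetilde\Qb)$, write the safety set as a halfspace with $\aa(\x,t)=\svec(\x\x^{\top})\neq\boldsymbol 0$ and $b(\x,t)=t$, and invoke Corollary~\ref{cor:DRCCP-big-M}. The one difference is the open-versus-closed boundary: the paper settles it by remarking that the distance to a halfspace does not depend on whether the halfspace is open or closed, whereas your $t+\eta$ monotonicity argument uses only the open-safety-set result as stated---a little more self-contained, though it establishes equal optimal values rather than identical feasible sets.
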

\begin{proof} Rename the decision variable $\x \gets (\x, t)$ and put 
$\boldsymbol{\tilde \xi} := \operatorname{svec}(\widetilde \Qb)$ and $\boldsymbol{\widehat \xi}_i := \operatorname{svec}(\widehat \Qb_i)$, for $i\in[N]$. Using the transformation 
$$
\begin{aligned}
    \x^{\top}\Qb\x = \operatorname{tr}(\x^{\top}\Qb\x) = \langle \x\x^{\top}, \Qb \rangle_F = \operatorname{svec}(\x\x^{\top})^{\top}\operatorname{svec}(\Qb)\,,
\end{aligned}
$$
we can see that
$$
\{\Qb \in \mathcal S^n:  \x^{\top}\Qb\x \leq t\} = \{\boldsymbol{\xi}\in \mathbb R^m: \operatorname{svec}(\x\x^{\top})^{\top}\boldsymbol{\xi} \leq t\}=:S(\x,t)\,.
$$
Observe that for any $\x \in \Delta$ we have $\operatorname{svec}(\x\x^{\top})\neq \boldsymbol{0}$. Moreover, the distance of a point to a halfspace is the same regardless of whether the halfspace is open or closed. The claim follows from Corollary~\ref{cor:DRCCP-big-M} after identification $\aa(\x,t) = \operatorname{svec}(\x\x^{\top})$ and $b(\x,t) = t$.
\end{proof}

\subsection{Simplification by use of entrywise maximum norm}

\noindent Constraint~\eqref{constr:dual-norm} above can generate challenges in optimization procedures. Indeed, observe that the Euclidean norm $\norm{\cdot}_2$ is self-dual and satisfies
$$
\norm{\operatorname{svec}(\x\x^{\top})}_2^* = \norm{\operatorname{svec}(\x\x^{\top})}_2 = \norm{\x\x^{\top}}_F = \x^{\top}\x\quad
\mbox{ for all } \x \in \Delta\, .$$
So for the choice $\norm{\cdot}=\norm{\cdot}_2$ we end up with a cubic constraint in \eqref{constr:dual-norm}, which could be hard to solve. {Rather, if we} were working with the vectorization operator instead of the symmetric vectorization operator, the $\ell_{\infty}$-norm would be perfect since it is the dual of the $\ell_1$-norm and it follows for every $\x \in \Delta$ that
\begin{eqnarray*}
\norm{\operatorname{vec}(\x\x^{\top})}_{\infty}^* &= &\norm{\operatorname{vec}(\x\x^{\top})}_{1}  =  \sum_{i=1}^n |x_i^2| + \sum_{i\neq j}|x_ix_j| \\ &=  &\sum_{i=1}^{n} x_i^2 + 2\sum_{i< j}x_ix_j = \left(\sum_{i=1}^nx_i\right)^2 = 1\,,
\end{eqnarray*}
which would render~\eqref{constr:dual-norm} a (possibly nonconvex) quadratic constraint.
Inspired by these observations we will construct a norm on $\mathbb R^m$ such that $\norm{\operatorname{svec}(\x\x^{\top})}^* = 1$ for every $\x \in \Delta$: a specifically weighted $\ell_{\infty}$-norm.

\begin{definition}
    For a positive definite matrix $\Sb\in \mathcal S^n_{++}$ we define the {\em weighted $\ell_{\infty}$-norm} on $\mathbb R^m$ as
$$\norm{\y}_{\infty,\Sb}:=\norm{\Sb\y}_{\infty}\,.
 $$
\end{definition}

\noindent Positive definiteness of $\Sb$ implies that 
$\norm{\cdot}_{\infty,\Sb}$ is indeed a norm, as it satisfies positive definiteness, absolute homogeneity, and the triangle inequality. The choice $\Sb = \Ib$ yields the ``classical" $\ell_{\infty}$-norm on $\mathbb R^m$. Here, we will rather focus on the weight matrix 
\begin{equation}\label{weight matrix}
 \Sb = \left(\begin{array}{rc}
  \Ib_n   & \Ob \\\
   \Ob  & \frac{1}{\sqrt{2}}\,\Ib_{m-n}
\end{array}\right)\,, 
\end{equation}
where $m-n= \tfrac{n(n-1)}{2}$. This choice will render the entrywise maximum norm for matrices, when applied to the symmetric vectorization:
\begin{lemma}\label{lem:weighted-ell-inf-norm}
Consider the weighted $\ell_{\infty}$-norm on $\mathbb R^m$ with $m = \tfrac{n(n+1)}{2}$ and weight matrix \eqref{weight matrix}.\\
Then for any symmetric matrix $\Ab = (a_{ij})\in \mathcal S^n$, we have 
    $$
\|\operatorname{svec}(\Ab)\|_{\infty,\Sb} = \max_{i,j} |a_{ij}| =: \|\Ab\|_{\mathrm{max}} \quad \text{and} \quad 
\|\operatorname{svec}(\Ab)\|_{\infty,\Sb}^*= \sum_{i=1}^{n}\sum_{j=1}^n |a_{ij}| =: \|\Ab\|_{1,1}\,.
$$
Furthermore, we have $\norm{\operatorname{svec}(\x\x^{\top})}_{\infty,\Sb}^* = 1\quad\mbox{for every }\x \in \Delta\,.$
\end{lemma}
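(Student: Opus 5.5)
The plan is to compute both norms directly from the definition $\norm{\y}_{\infty,\Sb}=\norm{\Sb\y}_\infty$ and then specialize to $\Ab=\x\x^{\top}$.

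\textbf{The primal norm.} For $\y=\operatorname{svec}(\Ab)$, the weight matrix \eqref{weight matrix} leaves the first $n$ coordinates $a_{ii}$ unchanged. It multiplies each off-diagonal coordinate $\sqrt2\,a_{ij}$ ($i<j$) by $1/\sqrt2$, which gives back $a_{ij}$. Hence $\Sb\operatorname{svec}(\Ab)$ lists exactly the entries $a_{ij}$ with $i\le j$. By symmetry of $\Ab$, the largest absolute value among them is $\max_{i,j}|a_{ij}|$, which proves the first identity.

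\textbf{The dual norm.} I will use the general fact that for an invertible $\Sb$ the dual of $\y\mapsto\norm{\Sb\y}_\infty$ is $\z\mapsto\norm{\Sb^{-\top}\z}_1$. To verify it, substitute $\w=\Sb\y$:
$$\sup_{\norm{\Sb\y}_\infty\le1}\z^{\top}\y=\sup_{\norm{\w}_\infty\le1}(\Sb^{-\top}\z)^{\top}\w=\norm{\Sb^{-\top}\z}_1 .$$
Here $\Sb$ is diagonal, so $\Sb^{-\top}=\operatorname{diag}(\Ib_n,\sqrt2\,\Ib_{m-n})$. Applying this to $\z=\operatorname{svec}(\Ab)$, the diagonal coordinates stay $a_{ii}$ and the off-diagonal ones become $\sqrt2\cdot\sqrt2\,a_{ij}=2a_{ij}$. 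Therefore
$$\norm{\operatorname{svec}(\Ab)}^*_{\infty,\Sb}=\sum_i|a_{ii}|+2\sum_{i<j}|a_{ij}|=\sum_{i,j}|a_{ij}|=\norm{\Ab}_{1,1},$$
where the middle equality uses symmetry of $\Ab$.

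\textbf{The simplex identity.} Take $\Ab=\x\x^{\top}$ with $\x\in\Delta$. All entries $x_ix_j$ are nonnegative, so
$$\norm{\x\x^{\top}}_{1,1}=\sum_{i,j}x_ix_j=(\e^{\top}\x)^2=1 .$$

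The only step needing any care is the dual-norm change of variables: one must track that $\Sb^{-\top}$, not $\Sb$, appears, so the off-diagonal factor is $\sqrt2\cdot\sqrt2=2$ rather than $1$. That factor $2$ is exactly what is needed to recover the symmetric double count in $\norm{\cdot}_{1,1}$. Everything else is routine.
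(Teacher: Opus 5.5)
Your proposal is correct and follows essentially the same route as the paper: the primal identity holds because $\Sb$ cancels the $\sqrt{2}$ factors, the dual norm comes from the substitution $\u=\Sb\y$ giving $\norm{\Sb^{-1}\z}_1$ (your $\Sb^{-\top}$ equals $\Sb^{-1}$ since $\Sb$ is diagonal), and the final claim follows by evaluating at $\x\x^{\top}$ with $(\e^{\top}\x)^2=1$. You are slightly more explicit than the paper in the primal step and in using symmetry to turn $\sum_i|a_{ii}|+2\sum_{i<j}|a_{ij}|$ into $\norm{\Ab}_{1,1}$; the paper leaves both implicit.
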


\begin{proof}
    For the choice of $\Sb$ as in~\eqref{weight matrix}, the weighted $\ell_{\infty}$-norm induces the entrywise maximum norm of $\Ab$,
eliminating the factor $\sqrt{2}$ in front of the off-diagonal entries set by the symmetric vectorization operator. Then naturally, the dual norm of the entrywise maximum norm is given by the entrywise $1$-norm. To see compatibility with duality in the norm of vectors, observe that 
$$
\Sb^{-1} = \left(\begin{array}{rc}
  \Ib_n   & \Ob \\\
   \Ob  & \sqrt{2}\,\Ib_{{m-n}}
\end{array}\right)\,.
$$ 
The dual norm $\norm{\cdot}_{\infty,\Sb}^*$ of $\norm{\cdot}_{\infty,\Sb}$ is given by 
$$
\begin{aligned}
\norm{\z}_{\infty,\Sb}^* &=\sup\{\z^{\top}\y:\norm{\y}_{\infty,\Sb}\leq 1\} =\sup\{\z^{\top}\y:\norm{\Sb\y}_{\infty}\leq 1\}\\ &= \sup\{(\Sb^{-1}\z)^{\top}\u:\norm{\u}_{\infty}\leq 1\}\,,
\end{aligned}
$$
where in the last step we used the transformation $\u = \Sb\y$. This is the definition of the dual norm of the $\ell_{\infty}$-norm, which is the $\ell_1$-norm, evaluated at $\Sb^{-1}\z$. Therefore,
$$
\sup\{(\Sb^{-1}\z)^{\top}\u:\norm{\u}_{\infty}\leq 1\} = \norm{\Sb^{-1}\z}_1\,,
$$
which implies that $\norm{\z}_{\infty,\Sb}^*= \norm{\Sb^{-1}\z}_1$. Now, for any arbitrary $\x \in \Delta$ we obtain
\begin{eqnarray*}
\norm{\operatorname{svec}(\x\x^{\top})}_{\infty,\Sb}^* &= &\norm{\Sb^{-1}\operatorname{svec}(\x\x^{\top})}_1 = \sum_{i=1}^n1|x_i^2| + \sum_{i<j}\sqrt{2}\sqrt{2}|x_ix_j| \\ &= &\sum_{i=1}^nx_i^2 + 2\sum_{i<j}x_ix_j 
      = \left(\sum_{i=1}^nx_i\right)^2 = 1^2 = 1\,,
\end{eqnarray*}
also directly obtained by $\|\operatorname{svec}(\Ab)\|_{\infty,\Sb}^*= \|\Ab\|_{1,1}$ for $\Ab=\x\x\T$, as well.
\end{proof}

\noindent As a result, use of this weighted $\ell_{\infty}$-norm $\|\cdot\|_{\infty,\Sb}$ simplifies problem~\eqref{DRCCStQP-general-norm}  significantly.

\begin{theorem}\label{DRCCStQP-maximum-norm}
    Let $p=1$ and $\norm{\cdot} = \norm{\cdot}_{\infty,\Sb}$ be the weighted $\ell_{\infty}$-norm from Lemma~\ref{lem:weighted-ell-inf-norm}, so that $\norm{\operatorname{svec}(\x\x^{\top})}_{\infty,\Sb}^* = 1$ for every $\x \in \Delta$ . Then \eqref{DRCCStQP} is equivalent to the following MIQP
\begin{subequations}\label{DRCCStQP_max_norm}
\begin{align}
\underset{\x, t, \gamma, \s, \u}{\min} &&& t&&&  \\
\mathrm{s.t.} &&& \theta N + \e^{\top}\s \leq \varepsilon N \gamma&&& \\
&&&t-\x^{\top}\widehat{\Qb}_i\x+ Mu_i \geq \gamma - s_i&&& \forall i \in [N]\label{constr:dual-norm=1}\\
&&& M(1-u_i) \geq \gamma - s_i&&& \forall i \in [N]\label{constr:big-M}\\
&&& \x \in \Delta, t\in \mathbb R, \gamma \in \mathbb R_+, \s \in \mathbb R_+^N, \u \in \{0,1\}^N\,,&&&
\end{align}
\end{subequations}
where $M$ is a sufficiently large constant. 
\end{theorem}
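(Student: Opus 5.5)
The proof is a direct specialization of Theorem~\ref{thm:DRCCStQP-big-M-general-norm} to the norm $\norm{\cdot}=\norm{\cdot}_{\infty,\Sb}$, with $\Sb$ the weight matrix~\eqref{weight matrix}. That theorem holds for any norm on $\mathbb R^m$, so with $p=1$ it tells us that \eqref{DRCCStQP} is equivalent to problem~\eqref{DRCCStQP-general-norm} in which the dual norm appearing in~\eqref{constr:dual-norm} is $\norm{\cdot}_{\infty,\Sb}^*$. The only change needed is to simplify the denominator in~\eqref{constr:dual-norm}.

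By Lemma~\ref{lem:weighted-ell-inf-norm}, $\norm{\operatorname{svec}(\x\x^{\top})}_{\infty,\Sb}^*=\|\x\x^{\top}\|_{1,1}=(\e^{\top}\x)^2=1$ for every $\x\in\Delta$. Since $\x\in\Delta$ is itself a constraint in~\eqref{DRCCStQP-general-norm}, every feasible point has denominator exactly $1$. Constraint~\eqref{constr:dual-norm} therefore becomes~\eqref{constr:dual-norm=1}, namely $t-\x^{\top}\widehat\Qb_i\x+Mu_i\ge\gamma-s_i$. All remaining constraints and the objective are untouched, so the two feasible sets coincide pointwise and the optimal values agree.

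It remains to check that the resulting problem is an MIQP. The objective $t$ is linear. Constraints~\eqref{constr:big-M}, the budget constraint $\theta N+\e^{\top}\s\le\varepsilon N\gamma$, and the simplex constraints are linear. Each constraint~\eqref{constr:dual-norm=1} is quadratic in $\x$, possibly nonconvex when $\widehat\Qb_i$ is indefinite, and linear in the other variables. The variables $\u$ are binary.

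The one point requiring care is the constant $M$. The phrase ``sufficiently large'' is inherited from Corollary~\ref{cor:DRCCP-big-M}, and after the substitution the quantities that $M$ must dominate only become simpler, so the same (or a smaller) valid constant works. If an explicit value is wanted, one can use that on $\Delta$ we have $|\x^{\top}\widehat\Qb_i\x|\le\|\widehat\Qb_i\|_{\max}$. It also suffices to bound $\gamma$ and $t$ near optimality, for instance via $t\le\max_i\|\widehat\Qb_i\|_{\max}+\theta/\varepsilon$. This bound is not needed for the equivalence itself, which follows by direct substitution.
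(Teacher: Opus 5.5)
Your proposal is correct and matches the paper's proof, which simply cites Theorem~\ref{thm:DRCCStQP-big-M-general-norm} and uses Lemma~\ref{lem:weighted-ell-inf-norm} to replace the denominator $\norm{\operatorname{svec}(\x\x^{\top})}_{\infty,\Sb}^*$ by $1$ on $\Delta$. Your extra remarks on the MIQP structure and on choosing $M$ go beyond what the paper writes here but are consistent with it; in particular, Lemma~\ref{lem:subproblem} shows that $M\ge\theta/\varepsilon$ is necessary for feasibility.
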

\begin{proof}
    Follows from Theorem~\ref{thm:DRCCStQP-big-M-general-norm} and Lemma~\ref{lem:weighted-ell-inf-norm}.
\end{proof}

\noindent If all the $\widehat{\Qb}_i$ are convex, problem \eqref{DRCCStQP_max_norm} becomes a mixed-integer convex optimization problem that solvers can solve directly.
Let us stress again that in view of matrix norms (and distances), we simply look at the entrywise maximum norm and the above construction $\|\operatorname{svec}(\Ab)\|_{\infty,\Sb}$ only helps with the symmetric vectorization.

\subsection{Construction of the worst-case distribution}\label{subsec:construction-wc-distr}

\noindent Now, we will sketch the construction of a worst-case distribution according to Theorem~\ref{thm:worst-case-distribution}. Let $(\x^*, t^*, \gamma^*, \s^*, \u^*)$ be an optimal solution to \eqref{DRCCStQP_max_norm}. This fixes the set $S(\x^*,t^*)$, which is now given by 
$$
S(\x^*,t^*) = \{\Qb \in \mathcal S^n: \x^{*\top}\Qb\x^* \leq t^*\}.
$$
First, compute $\operatorname{dist}(\widehat{\Qb}_i, S^{\mathbf c}(\x^*,t^*)) = (t^*-\x^{*\top}\widehat{\Qb}_i\x^*)_+$ for all $i\in [N]$ and rearrange the $\widehat{\Qb}_i$ in increasing distance to $S^{\mathbf c}(\x^*,t^*)$. Then, compute $i^*$, $j^*$ and $k^*$ according to Theorem~\ref{thm:worst-case-distribution}. Since $(\x^*, t^*, \gamma^*, \s^*, \u^*)$ is optimal, we are in case $j^*<N$. A point $\widehat{\Qb}_i$ is unsafe if and only if $u^*_i = 1$, in which case we also have $\gamma^* = s^*_i$. For a point $\widehat{\Qb}_i$, the closest unsafe point $\Qb^*_i \in S^{\mathbf c}(\x^*,t^*)$ is given by 
\begin{equation}\label{eqn:closest-unsafe-point}
\Qb^*_i := \widehat{\Qb}_i + (\gamma^*-s^*_i)\Eb\,.
\end{equation}
\noindent This is due to the following reason. If a point (matrix) is to be transported from $\x^{*\top}\widehat{\Qb}_i\x^*$ to the boundary $t^*$, then constraint \eqref{constr:dual-norm=1} must be tight at the optimum. Moreover, since for any $\x\in\Delta$ we have $\x^{\top}\Eb\x = 1$, we obtain
$$
t^* = \x^{*\top}\widehat{\Qb}_i\x^* + \gamma^*-s_i^* = \x^{*\top}\widehat{\Qb}_i\x^* + (\gamma^*-s_i^*) \x^{*\top}\Eb\x^*= \x^{*\top}(\widehat{\Qb}_i + (\gamma^*-s_i^*)\Eb)\x^*\,.
$$
\begin{itemize}
    \item In case that ${N}{\varepsilon}$ is a positive integer, we have $k^*\in\{0,1\}$ and the worst-case distribution $\mathbb P^*$ according to Theorem~\ref{thm:worst-case-distribution} is given as an empirical distribution also supported on $N$ points with uniform weights. This implies the existence of a \textbf{transport map} $T:\mathcal S^n \to \mathcal S^n$ such that 
\begin{equation}\label{eqn:transport-map}
T(\widehat{\Qb}_i) = \widehat{\Qb}_i + (\gamma^*-s^*_i)\Eb\,,
\end{equation}
which characterizes the solution $\mathbb P^* = T_{\#}\widehat{\mathbb P}_N$ as the pushforward of $\widehat{\mathbb P}_N$.
\item In case that ${N}{\varepsilon}$ is not a positive integer, we have $k^*\in (0,1)$ and the worst-case distribution $\mathbb P^*$ according to Theorem~\ref{thm:worst-case-distribution} is given as an empirical distribution supported on $N+1$ points. There is no transport map $T$. 
\end{itemize}
\noindent In either case, a \textbf{transport plan} $\pi^*\in \Pi(\widehat{\mathbb P}_N, \mathbb P^*)$ exists as the solution of the Monge-Kantorowich optimal transport problem \eqref{eqn:Wasserstein-distance}, and is given by
\begin{equation}\label{eqn:optimal-transport-plan}
\begin{aligned}
\pi^* &=  \frac{1}{N}\sum_{i=1}^{i^*}\delta_{(\widehat{\Qb}_i,\, \widehat{\Qb}_i)} + \frac{1}{N}\sum_{i=i^*+1}^{j^*}\delta_{(\widehat{\Qb}_i,\,\Qb_i^*)}\\
&+ \frac{k^*}{N}\delta_{(\widehat{\Qb}_{j^*+1},\,\Qb_{j^*+1}^*)} + \frac{1-k^*}{N}\delta_{(\widehat{\Qb}_{j^*+1},\,\widehat{\Qb}_{j^*+1})} +  \frac{1}{N}\sum_{i=j^*+2}^N\delta_{(\widehat{\Qb}_{i},\widehat{\Qb}_i)}\,,
\end{aligned}   
\end{equation}
for $\Qb^*_i$ as in \eqref{eqn:closest-unsafe-point}, and satisfies $(\operatorname{proj}_1)_{\#}\pi^* = \widehat{\mathbb P}_N$ and $(\operatorname{proj}_2)_{\#}\pi^* = \mathbb P^*$.\\ 
We consider the partition $[N] = \mathcal I_{\mathrm{NT}} \, \cup \, \mathcal I_{\mathrm{T}}\, \cup\, \mathcal I_{\mathrm{MS}}$. The set $\mathcal I_{\mathrm{NT}}:=\mathcal I_{\mathrm{U}} \, \cup \, \mathcal I_{\mathrm{F}}$ denotes the indices $i\in [N]$ whose points will not be transported, either because they are already unsafe points ($\mathcal I_{\mathrm{U}}$), or because they are safe points that are too far away and transporting those points is extremely costly ($\mathcal I_{\mathrm{F}}$). The set $\mathcal I_{\mathrm{T}}$ consists of all $i\in [N]$ that are safe but closest to the unsafe set $S^{\mathbf c}(\x^*,t^*)$ hence will be transported. Finally, $\mathcal I_{\mathrm{MS}}$ is a singleton corresponding to the point that will be mass split in case of nonintegral ${N}{\varepsilon}$, and empty otherwise.

\subsection{Distributional models for random symmetric matrices}

\noindent Before presenting distributional models for random symmetric matrices, consider the case of deterministic/noise-free samples. In this case, \eqref{DRCCStQP} reduces to an StQP.

\begin{corollary}\label{cor:noise-free} Let $p=1$ and $\norm{\cdot} = \norm{\cdot}_{\mathrm{max}}$. Let $\Qb_{\mathrm{nom}}\in \mathcal S^n$ be a nominal matrix and let $\mathbb P_{\mathrm{true}} = \delta_{\Qb_{\mathrm{nom}}}$. Suppose that the sample consists of $N$ times the nominal matrix $\{\widehat\Qb_i\}_{i=1}^N =\{ \Qb_{\mathrm{nom}}\}_{i=1}^N$, then \eqref{DRCCStQP} is equivalent to the following StQP:
$$
\min_{\x \in \Delta} \x^{\top}(\Qb_{\mathrm{nom}} + \tfrac{\theta}{\varepsilon} \Eb)\x\,.
$$    
\end{corollary}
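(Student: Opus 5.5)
Since all samples coincide with $\Qb_{\mathrm{nom}}$, the plan is to specialize the MIQP of Theorem~\ref{DRCCStQP-maximum-norm} to identical constraints for every $i$ and then eliminate the auxiliary variables $(\gamma,\s,\u)$ by hand. For fixed $\x\in\Delta$, write $q:=\x^{\top}\Qb_{\mathrm{nom}}\x$; I would show that the smallest feasible $t$ equals $q+\theta/\varepsilon$. Minimizing over $\x\in\Delta$ then gives the claim, because $\x^{\top}\Eb\x=1$ on $\Delta$, so $q+\theta/\varepsilon=\x^{\top}(\Qb_{\mathrm{nom}}+\tfrac{\theta}{\varepsilon}\Eb)\x$.

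\emph{Upper bound.} Take $t=q+\theta/\varepsilon$, $\gamma=\theta/\varepsilon$, $\s=\boldsymbol 0$ and $\u=\boldsymbol 0$. Then $\theta N+\e^{\top}\s=\theta N=\varepsilon N\gamma$. Constraint~\eqref{constr:dual-norm=1} reads $t-q\ge\gamma$, which holds with equality. Constraint~\eqref{constr:big-M} holds provided $M\ge\theta/\varepsilon$, which is covered by ``$M$ sufficiently large''.

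\emph{Lower bound.} Take any feasible $(\x,t,\gamma,\s,\u)$ and set $d:=t-q$, which is the same for all $i$. For each $i$ there are two cases.
\begin{itemize}
\item If $u_i=0$, then~\eqref{constr:dual-norm=1} gives $s_i\ge\gamma-d$.
\item If $u_i=1$, then~\eqref{constr:big-M} gives $s_i\ge\gamma$.
\end{itemize}
Since $\gamma\ge0$ and $s_i\ge0$, in both cases $s_i\ge(\gamma-\max\{d,0\})_+$. Substituting into the first constraint yields
\[
\theta N+N(\gamma-d_+)_+\le\varepsilon N\gamma .
\]
Because $\theta>0$, this forces $\gamma>0$.
\begin{itemize}
\item If $d_+\ge\gamma$, then $\theta\le\varepsilon\gamma\le\varepsilon d_+$, so $d\ge\theta/\varepsilon$.
\item If $d_+<\gamma$, then $\theta+\gamma-d_+\le\varepsilon\gamma$, so $d_+\ge\theta+(1-\varepsilon)\gamma>\theta$. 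Together with $\gamma>d_+$ this gives $d_+\ge\theta+(1-\varepsilon)d_+$, hence $d_+\ge\theta/\varepsilon$. Since $\theta/\varepsilon>0$, this means $d=d_+\ge\theta/\varepsilon$.
\end{itemize}
So $t\ge q+\theta/\varepsilon$, matching the upper bound.

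The main obstacle is the bookkeeping in the lower bound: the argument must hold for arbitrary binary $\u$, and the sign of $d$ needs care. An alternative that avoids the MIQP is to work directly from Theorem~\ref{thm:worst-case-distribution}. There, all points are at the common distance $d_+$ from $S^{\mathbf c}$, so the worst-case violation probability is $\min\{1,\theta/d_+\}$ when $d>0$ and $1$ when $d\le0$. Requiring this to be at most $\varepsilon$ gives $d\ge\theta/\varepsilon$ immediately. I would present this as a cross-check, using the dual-norm value $\norm{\operatorname{svec}(\x\x^{\top})}^*_{\infty,\Sb}=1$ from Lemma~\ref{lem:weighted-ell-inf-norm} to compute the distance.
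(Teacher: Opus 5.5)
Your proof is correct and follows the same route as the paper: you specialize the MIQP of Theorem~\ref{DRCCStQP-maximum-norm} to identical samples and eliminate $(\gamma,\s,\u)$, reaching the optimal shift $\gamma=\theta/\varepsilon$ with $\s=\u=\boldsymbol 0$. Your per-index bound $s_i\ge(\gamma-d_+)_+$ is more careful than the paper's appeal to symmetry, which only rules out the all-one pattern of $\u$ and tacitly drops the constraint $M\ge\gamma-s$; your bound covers mixed binary patterns and makes the requirement $M\ge\theta/\varepsilon$ explicit.
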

\begin{proof} In this case, by Theorem~\ref{DRCCStQP-maximum-norm} the \eqref{DRCCStQP} is equivalent to  
\begin{subequations}\label{DRCCStQP:6}
\begin{align}
\underset{\x, t, \gamma, \s, \u}{\min} &&& t&&&  \\
\mathrm{s.t.} &&& \theta N + \e^{\top}\s \leq \varepsilon N \gamma&&& \label{constr:theta-epsilon}\\
&&&t-\x^{\top}\Qb_{\mathrm{nom}}\x+ Mu_i \geq \gamma - s_i&&& \forall i \in [N] \label{constr:dual-norm=1-Qnom}\\
&&& M(1-u_i) \geq \gamma - s_i&&& \forall i \in [N]\label{constr:big-M-Qnom}\\
&&& \x \in \Delta, t\in \mathbb R, \gamma \in \mathbb R_+, \s \in \mathbb R_+^N, \u \in \{0,1\}^N\,.&&&
\end{align}
\end{subequations}
By symmetry, we have $s_i = s \in \mathbb R_+$ and $u_i = u \in \{0,1\}$ for all $i \in [N]$. In case $u_i = 1$ for all $i \in [N]$, we have $s\geq \gamma$ and combining this with \eqref{constr:theta-epsilon} we arrive at $\theta + s \leq \varepsilon \gamma$, a contradiction. Thus, the only possible case is $u_i = 0$ for all $i \in [N]$, which makes \eqref{DRCCStQP:6} equivalent to 
\begin{equation*}
\begin{aligned}
\underset{\x, t, \gamma, s}{\min} &&& t\\
\mathrm{s.t.} &&& \theta N + sN \leq \varepsilon N \gamma\\
&&&t-\x^{\top}\Qb_{\mathrm{nom}}\x \geq \gamma - s \\
&&& \x \in \Delta, t\in \mathbb R, \gamma \in \mathbb R_+, s \in \mathbb R_+\,.
\end{aligned}
\end{equation*}
We can recognize the epigraphic structure and project out the epigraphic variable $t$ by writing this problem equivalently as follows
\begin{equation*}
\begin{aligned}
\underset{\x, \gamma, s}{\min} &&& \x^{\top}\Qb_{\mathrm{nom}}\x  + \gamma - s\\
\mathrm{s.t.} &&& \theta + s\leq \varepsilon \gamma\\
&&& \x \in \Delta, \gamma \in \mathbb R_+, s \in \mathbb R_+\,.
\end{aligned}
\end{equation*}
Now this is a separable optimization problem and can be written as
\begin{equation*}
\underset{\x\in \Delta}{\min}\{\x^{\top}\Qb_{\mathrm{nom}}\x  \}+ \underset{(\gamma, s) \in \mathbb R^2_+}{\min}\{\gamma - s:\theta + s\leq \varepsilon \gamma\}\,.
\end{equation*}
The objective function $\gamma - s$ decreases when $s$ increases, hence at optimum we must have $s^* = \varepsilon\gamma^*-\theta \geq 0$. Substituting, we obtain 
\begin{equation*}
\underset{(\gamma, s) \in \mathbb R^2_+}{\min}\{\gamma - s:\theta + s\leq \varepsilon \gamma\}
 = \underset{\gamma \in \mathbb R_+}{\min}\{(1-\varepsilon)\gamma +\theta:\gamma \geq \tfrac{\theta}{\varepsilon}\}\,.
\end{equation*}
This objective function is minimized for $\gamma =\frac{\theta}{\varepsilon}$, which yields the claim.
\end{proof}

\noindent The proof of Corollary~\ref{cor:noise-free} coincides with the transport map \eqref{eqn:transport-map}, since there we have $\gamma^*=\tfrac{\theta}{\varepsilon}$, $\s^*=\u^*=\boldsymbol{0}$, which means that all points $\Qb_{\mathrm{nom}}$ are transported (shifted by the same amount $\tfrac{\theta}{\varepsilon})$. Moreover, from the definition of the feasible set,
$$
\underset{\x \in \Delta}{\arg\min} \,\x^{\top}(\Qb_{\mathrm{nom}} + \tfrac{\theta}{\varepsilon} \Eb)\x = \underset{\x \in \Delta}{\arg\min} \,\x^{\top}\Qb_{\mathrm{nom}}\x\,.
$$
\noindent For the rest of this section, we will discuss two common distributional models for random symmetric matrices: the Wishart Ensemble and the Gaussian Orthogonal Ensemble. 

\begin{definition}\label{def:Wishart} 
    A random symmetric matrix $\widetilde\Wb \in \mathcal S^n$ is said to be distributed according to the Wishart Ensemble with covariance matrix $\sSigma$ and $k$ degrees of freedom (in symbols: $\widetilde \Wb \sim \mathcal W_n(\sSigma,k))$, if 
    $\widetilde\Wb  = \widetilde \Yb \widetilde \Yb^{\top}$ where $\widetilde \Yb = (\widetilde \y_1, \dots, \widetilde \y_k)$ and $\widetilde \y_1, \dots, \widetilde \y_k \sim \mathcal N_n(\boldsymbol{0},\sSigma)$ are i.i.d. random vectors from the $n$-variate normal distribution with mean $\boldsymbol{0}$ and positive definite covariance matrix $\sSigma$.
\end{definition}

\begin{definition}\label{def:GOE}
A random symmetric matrix $\widetilde\Gb = (\widetilde g_{ij}) \in \mathcal S^n$ is said to be distributed according to the Gaussian Orthogonal Ensemble (in symbols: $\widetilde\Gb \sim \operatorname{GOE}(n))$, if the diagonal entries $\widetilde g_{ii}$ are i.i.d. with $\widetilde g_{ii} \sim \mathcal N(0,2)$ for $1\leq i \leq n$ and the off-diagonal entries $\widetilde g_{ij}$ are i.i.d. with $\widetilde g_{ij} \sim \mathcal N(0,1)$ for $1\leq i < j\leq n$ and independent of the $\widetilde g_{ii}$.
\end{definition}

\noindent Now we define the two perturbation models used for our numerical experiments.

\begin{definition}\label{def:perturbation-models} Let $\Qb_{\mathrm{nom}}\in \mathcal S^n$ be an arbitrary nominal matrix and $\beta >0$. In case the true distribution $\mathbb P_{\mathrm{true}}$ is such that
\begin{equation}\label{WishartE-perturbation-model}
\widetilde \Qb = \Qb_{\mathrm{nom}} + \beta \widetilde \Wb\, ,
\end{equation}
for $\widetilde \Wb \sim \mathcal W_n(\Ib,k)$ we speak of a Wishart Ensemble perturbation model
 and in case \begin{equation}\label{GOE-perturbation-model}
\widetilde \Qb = \Qb_{\mathrm{nom}} + \beta \widetilde \Gb\, ,
\end{equation}
for $\widetilde \Gb \sim \operatorname{GOE}(n)$, we speak of a GOE perturbation model.
\end{definition}

\noindent The Wishart Ensemble perturbation model can naturally produce positive definite instances (e.g., by setting $\Qb_{\mathrm{nom}} = \Ob$) and the GOE perturbation model can naturally produce indefinite instances (again, e.g., by setting $\Qb_{\mathrm{nom}} = \Ob$). Moreover, there exist reformulations for the problem of minimizing the Value-at-risk of the random variable $\x^{\top}\widetilde\Qb\x$ when $\widetilde \Qb$ is given in either of those forms:  

\begin{lemma}\cite[Proposition~4]{bomze2025uncertain}\label{lem:closed-form-VaR}
Let $\varepsilon \in (0,\tfrac{1}{2})$ and suppose that the distribution $\mathbb P_{\mathrm{true}}$ is either of the form \eqref{WishartE-perturbation-model} or \eqref{GOE-perturbation-model}, then  
$$
\operatorname{opt}:=\min_{\x\in \Delta} \operatorname{VaR}_{1-\varepsilon, \mathbb P_{\mathrm{true}}}[\x^{\top}\widetilde\Qb\x] = \min_{\x\in \Delta} \x^{\top}(\Qb_{\mathrm{nom}} + \tau \Ib)\x\,.
$$
In case of the Wishart Ensemble perturbation model \eqref{WishartE-perturbation-model} we have $\tau := 2\beta P^{-1}(\tfrac{k}{2},1-\varepsilon)$, where $ P^{-1}$ is the inverse of the regularized lower incomplete gamma function $P$ with respect to its second argument and in case of the GOE perturbation model \eqref{GOE-perturbation-model} we have $\tau := \sqrt{2}\beta \Phi^{-1}(1-\varepsilon)>0$, where $ \Phi^{-1}$ is the inverse cumulative distribution function of the standard normal distribution.
\end{lemma}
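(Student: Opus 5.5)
The plan is to compute, for each fixed $\x\in\Delta$, the distribution of the scalar $\x^{\top}\widetilde\Qb\x$ exactly. Its $(1-\varepsilon)$-quantile then has closed form, and minimizing over $\Delta$ gives the stated StQP. In both models $\x^{\top}\widetilde\Qb\x = \x^{\top}\Qb_{\mathrm{nom}}\x + \beta\,\x^{\top}\widetilde\Ab\x$ with $\widetilde\Ab\in\{\widetilde\Wb,\widetilde\Gb\}$. Since VaR is translation equivariant and positively homogeneous, we have
$$\operatorname{VaR}_{1-\varepsilon}[\x^{\top}\widetilde\Qb\x] = \x^{\top}\Qb_{\mathrm{nom}}\x + \beta\operatorname{VaR}_{1-\varepsilon}[\x^{\top}\widetilde\Ab\x].$$
So it suffices to show that $\operatorname{VaR}_{1-\varepsilon}[\x^{\top}\widetilde\Ab\x] = c\,\x^{\top}\x$ for a constant $c$ independent of $\x$. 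Then $\tau=\beta c$ and $\x^{\top}\Qb_{\mathrm{nom}}\x+\tau\x^{\top}\x=\x^{\top}(\Qb_{\mathrm{nom}}+\tau\Ib)\x$.

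\emph{GOE case.} The quantity $\x^{\top}\widetilde\Gb\x=\sum_i x_i^2\widetilde g_{ii}+2\sum_{i<j}x_ix_j\widetilde g_{ij}$ is a linear combination of independent centered Gaussians, hence Gaussian with mean $0$. Its variance is
$$2\sum_i x_i^4+4\sum_{i<j}x_i^2x_j^2 = 2\Big(\sum_i x_i^2\Big)^2 = 2(\x^{\top}\x)^2.$$
Hence $\x^{\top}\widetilde\Gb\x\sim\mathcal N(0,2(\x^{\top}\x)^2)$, and its $(1-\varepsilon)$-quantile is $\sqrt2\,\Phi^{-1}(1-\varepsilon)\,\x^{\top}\x$. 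The condition $\varepsilon<\tfrac12$ gives $\Phi^{-1}(1-\varepsilon)>0$, which yields $\tau>0$ as stated. (An alternative route is orthogonal invariance of GOE: rotate $\x/\|\x\|$ to $\e_1$, so that $\x^{\top}\widetilde\Gb\x\overset{d}{=}\|\x\|^2\widetilde g_{11}$.)

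\emph{Wishart case.} Write $\x^{\top}\widetilde\Wb\x=\sum_{l=1}^k(\x^{\top}\widetilde\y_l)^2$ with $\widetilde\y_l\sim\mathcal N_n(\boldsymbol 0,\Ib)$ i.i.d. Then $\x^{\top}\widetilde\y_l\sim\mathcal N(0,\x^{\top}\x)$ i.i.d., so $\x^{\top}\widetilde\Wb\x=(\x^{\top}\x)\chi^2_k$. This is a Gamma distribution with shape $k/2$ and scale $2\x^{\top}\x$. Its $(1-\varepsilon)$-quantile is $F^{-1}(1-\varepsilon;\tfrac k2,2\x^{\top}\x)=2P^{-1}(\tfrac k2,1-\varepsilon)\,\x^{\top}\x$ by the notation fixed in the introduction, giving $\tau=2\beta P^{-1}(\tfrac k2,1-\varepsilon)$.

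The main point needing care is the VaR bookkeeping. Both distributions are continuous with strictly increasing CDF on their support, so the infimum in the definition of VaR equals the usual inverse CDF, and the scaling rules apply with $\beta>0$ and $\x^{\top}\x>0$ on $\Delta$. The remaining steps are routine: the variance computation in the GOE case and the chi-square identification in the Wishart case. Finally, the minimum over the compact set $\Delta$ is attained by continuity, which justifies writing $\min$.
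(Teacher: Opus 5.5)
Your proof is correct: for fixed $\x\in\Delta$ you identify the exact law of $\x^{\top}\widetilde\Qb\x$. In the GOE model it is Gaussian with mean $\x^{\top}\Qb_{\mathrm{nom}}\x$ and variance $2\beta^2(\x^{\top}\x)^2$; in the Wishart model it is a Gamma law with shape $k/2$ and scale $2\beta\,\x^{\top}\x$, shifted by $\x^{\top}\Qb_{\mathrm{nom}}\x$. You then read off its $(1-\varepsilon)$-quantile as $\x^{\top}\Qb_{\mathrm{nom}}\x+\tau\,\x^{\top}\x$ and minimize over $\Delta$. The paper does not reprove this lemma but imports it from \cite[Proposition~4]{bomze2025uncertain}; your argument is essentially the distributional computation behind that result, and it matches the paper's own remark in Example~\ref{ex:indef_changing_beta_and_n} that $\beta\x^{\top}\widetilde\Gb\x$ is normal with mean $0$ and variance $2\beta^2\|\x\|_2^4$.
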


%----------------------------------
%       4. Numerical experiments 
% ---------------------------------

\section{Numerical experiments}\label{sec:4.NumericalExperiments}

\noindent In this section, we will investigate the performance of \eqref{DRCCStQP} for $p=1$ and $\norm{\cdot} = \norm{\cdot}_{\mathrm{max}}$ when the (unknown) true distribution is given as in Definition~\ref{def:perturbation-models}. We make a distinction: whether the nominal matrix $\Qb_{\mathrm{nom}}$ is zero or not. All experiments were performed on a machine equipped with an Intel(R) Core(TM) i5-8350U CPU @ 1.70GHz (4 physical cores, 8 logical threads), 16 GB RAM, running Windows 11. We solved the optimization problems using \texttt{Gurobi} 11.0.2 on \texttt{Python} 3.12.0. Integer feasibility tolerance, relative MIP optimality gap, primal feasibility tolerance, and dual feasibility tolerance were all set to $10^{-5}$.

\subsection{When the nominal matrix is zero}

\noindent First, we consider the case of a Wishart Ensemble perturbation model \eqref{WishartE-perturbation-model} with $k=n$ degrees of freedom. Then, we investigate the case of a GOE perturbation model \eqref{GOE-perturbation-model}. Both cases yield similar results, but the indefinite GOE instances take considerably longer to solve. 

\subsubsection{Positive definite instances}

\noindent Suppose that $\mathbb P_{\mathrm{true}}$ is of the form $
\widetilde \Qb = \beta \widetilde \Wb$, i.e., a Wishart Ensemble perturbation model \eqref{WishartE-perturbation-model} with $\Qb_{\mathrm{nom}} = \Ob$. By Lemma~\ref{lem:closed-form-VaR}, the problem of minimizing the Value-at-Risk of the random variable $\x^{\top}\widetilde\Qb\x$ at confidence level $1-\varepsilon$ has a closed form solution: 
$$
\operatorname{opt}=\min_{\x \in \Delta} \operatorname{VaR}_{1-\varepsilon, \mathbb P_{\mathrm{true}}}[\x^{\top}\widetilde\Qb\x] = \min_{\x \in \Delta} \tau \,\x^{\top}\x = \tau \min_{\x \in \Delta}  \x^{\top}\x = \frac{\tau}{n}\,, 
$$
where $\tau = 2\beta P^{-1}(\tfrac{n}{2},1-\varepsilon)$ and the minimizer is given by $\x^*=\tfrac{1}{n}\e$.

\begin{example}[Baseline parameters]\label{ex:Qnom=0-wishart_baseline} For $n=10, \beta = 5$, and  $\varepsilon = 0.1$, we have that the optimal value of the minimization of the Value-at-Risk is equal to $\frac{\tau}{n} \approx 7.9956$. By Corollary~\ref{cor:DRCCStQP-is-supVaR}, \eqref{DRCCStQP} robustifies the Value-at-Risk around the empirical measure. If the radius $\theta$ is sufficiently large, we expect its optimal value to exceed $\tfrac{\tau}{n}$. We put $N = 20$ and generate a sample $\{\widehat \Wb_i\}_{i=1}^N\sim \mathcal W_n(\Ib,n)$. We define $\widehat \Qb_i = \beta \widehat \Wb_i$, for $i\in [N]$, and put big-$M$ number $M = 5$. We take a small Wasserstein ball radius $\theta = 0.05$ and solve \eqref{DRCCStQP_max_norm}. In this case, we obtain $t^* \approx 6.8199$ (see Table~\ref{tab:Qnom=0-wishart_epsilon}, third row). In this case, there is one unsafe point, one point is transported, and no point is mass split. 
\end{example}

\begin{table}[H]
\centering
\begin{tabular}{|cccccc|}
\hline
$\varepsilon$ & $\theta$ & $\beta$ & $M$ & $n$ & $N$\\
\hline
0.1 & 0.05 & 5 & 5 & 10 & 20\\
\hline
\end{tabular}
\caption{Baseline parameters}
\label{tab:baseline-parameters}
\end{table}

\noindent Observe that the effects of the big-$M$ constant $M$ on the optimal value $t^*$ can be analyzed in closed form: If the big-$M$ constant $M$ is large enough, then changing it does not change the optimal value $t^*$. The big-$M$ constant $M$ should be at least as large as $\tfrac{\theta}{\varepsilon}$, otherwise problem \eqref{DRCCStQP} becomes infeasible, as the following Lemma~\ref{lem:subproblem} demonstrates.

\begin{lemma}\label{lem:subproblem} Let $\varepsilon\in (0,\tfrac{1}{2}]$, $\theta >0$ and $M>0$ be fixed and consider the following feasibility subproblem of \eqref{DRCCStQP_max_norm}:
\begin{subequations}\label{DRCCStQP_max_norm_subproblem}
\begin{align}
\underset{\gamma, \s, \u}{\min} &&& 0&&& \notag \\
\mathrm{s.t.} &&& \theta N + \e^{\top}\s \leq \varepsilon N \gamma&&&\label{constr:subproblem_8a} \\
&&& M(1-u_i) \geq \gamma - s_i&&& \forall i \in [N]\label{constr:subproblem_8b}\\
&&&\gamma \in \mathbb R_+, \s \in \mathbb R_+^N, \u \in \{0,1\}^N\,.&&&\label{constr:subproblem_8c}
\end{align}
\end{subequations}
Then, \eqref{DRCCStQP_max_norm_subproblem} has a feasible solution if and only if $\theta \leq \varepsilon M$. In particular, if $\theta > \varepsilon M$, then problem \eqref{DRCCStQP_max_norm} is infeasible.
\end{lemma}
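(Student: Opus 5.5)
We prove both directions of the equivalence by direct manipulation of constraints \eqref{constr:subproblem_8a}--\eqref{constr:subproblem_8c}, and then deduce the infeasibility claim.

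\emph{Sufficiency ($\theta\le\varepsilon M$).} Take $\u=\boldsymbol 0$, $\s=\boldsymbol 0$ and $\gamma=\theta/\varepsilon$. Then \eqref{constr:subproblem_8a} reads $\theta N\le \varepsilon N\cdot\theta/\varepsilon$, which holds with equality. Constraint \eqref{constr:subproblem_8b} becomes $M\ge\theta/\varepsilon$, which is exactly the hypothesis. Sign constraints are clear since $\theta>0$. This mirrors the solution found in the proof of Corollary~\ref{cor:noise-free}.

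\emph{Necessity.} Suppose $(\gamma,\s,\u)$ is feasible and split $[N]=I_0\cup I_1$ by $u_i=0$ or $u_i=1$. For $i\in I_1$, \eqref{constr:subproblem_8b} gives $s_i\ge\gamma$. For $i\in I_0$ it gives $s_i\ge\gamma-M$, and also $s_i\ge 0$, so $s_i\ge(\gamma-M)_+$. Summing and inserting into \eqref{constr:subproblem_8a} yields
$$
\theta N + |I_1|\gamma + |I_0|(\gamma-M)_+ \le \varepsilon N\gamma .
$$
We then argue in two cases on $\gamma$.
\begin{itemize}[label={}]
\item If $\gamma\le M$: the inequality gives $\theta N\le(\varepsilon N-|I_1|)\gamma\le\varepsilon N\gamma\le\varepsilon N M$, hence $\theta\le\varepsilon M$.
\item If $\gamma>M$: we get $\theta N\le \varepsilon N\gamma-|I_1|\gamma-|I_0|(\gamma-M)=(\varepsilon N - N)\gamma+|I_0|M$. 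Since $\varepsilon N-N\le 0$ and $\gamma>M$, the right side is at most $(\varepsilon N-N)M+|I_0|M\le \varepsilon N M$, using $|I_0|\le N$. Hence again $\theta\le\varepsilon M$.
\end{itemize}
This case split is the only delicate point: one must combine the lower bounds on $s_i$ with the sign of $\gamma-M$ correctly. The hypothesis $\varepsilon\le\tfrac12$ is not essential; $\varepsilon<1$ suffices.

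\emph{Consequence.} Every feasible point of \eqref{DRCCStQP_max_norm} projects onto a feasible point $(\gamma,\s,\u)$ of \eqref{DRCCStQP_max_norm_subproblem}, because the subproblem merely drops constraint \eqref{constr:dual-norm=1} and the variables $\x,t$. Therefore, if $\theta>\varepsilon M$, the subproblem is infeasible, and so is \eqref{DRCCStQP_max_norm}.
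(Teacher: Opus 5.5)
Your proof is correct and follows essentially the paper's route: bound each $s_i$ from below via \eqref{constr:subproblem_8b}, sum, and insert into \eqref{constr:subproblem_8a}. The differences are cosmetic. The paper avoids your case split on the sign of $\gamma-M$ by combining the summed inequality $\theta N+N\gamma-(N-k)M\le\varepsilon N\gamma$ directly with $\gamma\ge\theta/\varepsilon$ (from $\s\ge\boldsymbol 0$). For sufficiency it uses the witness $(\gamma,\s,\u)=(M,\boldsymbol 0,\boldsymbol 0)$ instead of your $\gamma=\theta/\varepsilon$. Your side remarks are also correct: only $\varepsilon<1$ is needed, and infeasibility of \eqref{DRCCStQP_max_norm} follows by projection onto the subproblem.
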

\begin{proof} \underline{$\Longrightarrow$:} Suppose that \eqref{DRCCStQP_max_norm_subproblem} has a feasible solution $(\gamma^*, \s^*, \u^*)$ and denote by $k := \e^{\top}\u^*$ the number of indices with $u_i^*=1$. For every index $i\in[N]$ with $u_i^*=0$, \eqref{constr:subproblem_8b} implies $s_i^* \geq \gamma^* - M$, while every index with $u_i^*=1$ satisfies $s_i^*\ge\gamma^*$, since $M(1-u_i^*)=0$. Summing over all $i\in[N]$, we obtain the condition $\e^{\top}\s^* \geq (N-k)(\gamma^*-M)\,+\,k\gamma^* \;=\; N\gamma^* - (N-k)M$, which we can combine with \eqref{constr:subproblem_8a} to obtain $\theta N + N\gamma^* - (N-k)M \leq \varepsilon N\gamma^*$ or, after rearranging terms, $(N-k)M - \theta N \geq (1-\varepsilon)N\gamma^*$. Since $\s^* \geq \boldsymbol{0}$, constraint \eqref{constr:subproblem_8a} also yields $\gamma^* \geq \tfrac{\theta}{\varepsilon}$. As the coefficient $(1-\varepsilon)N$ is positive, substituting gives $(N-k)M - \theta N \geq (1-\varepsilon)N\tfrac{\theta}{\varepsilon}$, i.e., $(N-k)M \geq \tfrac{\theta N}{\varepsilon}$. In particular $N-k>0$, and therefore $\theta \leq \tfrac{N-k}{N}\varepsilon M \leq \varepsilon M$.

\noindent \underline{$\Longleftarrow$:}
Now suppose that $\theta \leq \varepsilon M$, then $(\gamma^*, \s^*, \u^*) = (M, \boldsymbol{0}, \boldsymbol{0})$ is a feasible solution for \eqref{DRCCStQP_max_norm_subproblem}.
\end{proof}

\noindent Therefore, we will focus on the effects of the rest of the parameters (violation tolerance $\varepsilon$, Wasserstein radius $\theta$, scale $\beta$, dimension size $n$, and sample size $N$).

\begin{example}[Structure of the solution for different values of $\varepsilon$]\label{ex:Qnom=0-wishart_epsilon-sweep} Now use the same setup as in Example~\ref{ex:Qnom=0-wishart_baseline} but also investigate the performance when changing the risk tolerance $\varepsilon \in \{0.01, 0.05, 0.1, 0.13, 0.2, 0.26\}$. Recall Subsection~\ref{subsec:construction-wc-distr} again for the notation on the set of unsafe points $\mathcal I_{\mathrm{U}}$, the set of transported points $\mathcal I_{\mathrm{T}}$, and the set of mass split point $\mathcal I_{\mathrm{MS}}$. In case $\varepsilon = 0.01$, since $N=20$ and $\varepsilon N = 0.2$, the solution possibly consists of mass splitting one point (Table~\ref{tab:Qnom=0-wishart_epsilon}, first row). Now, for $\varepsilon = 0.05$, the solution consists of transporting one point (Table~\ref{tab:Qnom=0-wishart_epsilon}, second row). For $\varepsilon = 0.2$, the transport plan transports two points and has two {unsafe} points (Table~\ref{tab:Qnom=0-wishart_epsilon}, fifth row). A point is also mass split for $\varepsilon \in \{0.13, 0.26\}$ (Table~\ref{tab:Qnom=0-wishart_epsilon}, fourth and sixth rows). For all rows, for $i \in \mathcal I_{\mathrm{U}}$ we have $u_i^* = 1$ and  $s_i^* = \gamma^*$, since for unsafe points, the constraint \eqref{constr:big-M} is tight. Independently of $\varepsilon$, the sorting of the $\widehat{\Qb}_i$ remains very stable and corresponds to the (decreasing) sorting of $(\tfrac{1}{n}\e)^{\top}\widehat{\Qb}_i(\tfrac{1}{n}\e)$. Both the optimal value of the minimization of the Value-at-Risk and the optimal value $t^*$ of \eqref{DRCCStQP_max_norm} decrease with increasing $\varepsilon$. As $\varepsilon$ increases, the running time also increases. Moreover, observe that in Table~\ref{tab:Qnom=0-wishart_epsilon}, $t^*$ is below $\operatorname{opt}$ for $\varepsilon=0.1$ but above it for $\varepsilon=0.01$, as expected. \\ \textbf{\underline{Bottom line:} Due to its combinatorial nature, the running time increases with increasing $\varepsilon$.}
\end{example}

\begin{table}[H]
\centering
\begin{tabular}{|ccccccc|}
\hline
$\varepsilon$ & opt & $t^*$ & $\mathcal I_{\mathrm{U}}$ & $\mathcal I_{\mathrm{T}}$ & $\mathcal I_{\mathrm{MS}}$ & running time\\
\hline
0.01 & 11.6046 & 12.0973 & $\{\}$  &$\{\}$ & $\{13\}$  & 0.10s\\
0.05  & 9.1535 & 8.0973 & $\{\}$  & $\{18\}$  & $\{\}$ & 0.05s\\
0.10  & 7.9936 & 6.8199 & $\{18\}$  & $\{13\}$  & $\{\}$ & 0.53s\\
0.13  & 7.5285 & 6.2122 & $\{18, 3\}$  & $\{\}$  & $\{19\}$ & 0.65s\\
0.20 & 6.7210 & 5.0494 & $\{18,3\}$  & $\{19, 11\}$  & $\{\}$  & 1.18s\\
0.26  & 6.1934 & 4.7727 & $\{18, 3, 11\}$  & $\{19, 16\}$  & $\{15\}$ & 1.72s\\
\hline
\end{tabular}
\caption{Wishart Ensemble perturbation model with $\Qb_{\mathrm{nom}} = \Ob$. Structure of the solution for different values of $\varepsilon$, for fixed $\theta = 0.05$, $\beta = 5$, $M=5$, $n=10$, $N=20$.}
\label{tab:Qnom=0-wishart_epsilon}
\end{table}

\noindent Now we will investigate the optimal value $t^*$ and running time for different parameter choices, using 10 random number generator seeds. 

\begin{example}[Changing the violation tolerance $\varepsilon$]\label{ex:Qnom=0-wishart_epsilon-seed-sweep} Same setup as in Example~\ref{ex:Qnom=0-wishart_epsilon-sweep} but for 20 different random number generator seeds. The violation tolerance $\varepsilon \in \{0.01,0.05, 0.1, 0.13, 0.2, 0.26\}$ varies, but we leave $\theta$, $\beta$, $M$, $n$, and $N$ constant as in Table~\ref{tab:baseline-parameters}. We compute the optimal value $t^*$ and the running time. Figure~\ref{fig:Qnom=0-wishart-epsilon-seeds-sweep} shows the optimal value and running time on average, as well as their min-max range. As $\varepsilon$ increases, the optimal value $t^*$ decreases and the running time increases. \\ \textbf{\underline{Bottom line:} Larger $\varepsilon$ allows for more optimistic decisions.}
\end{example}
  \begin{figure}[H]
        \centering
\includegraphics[width=\linewidth]{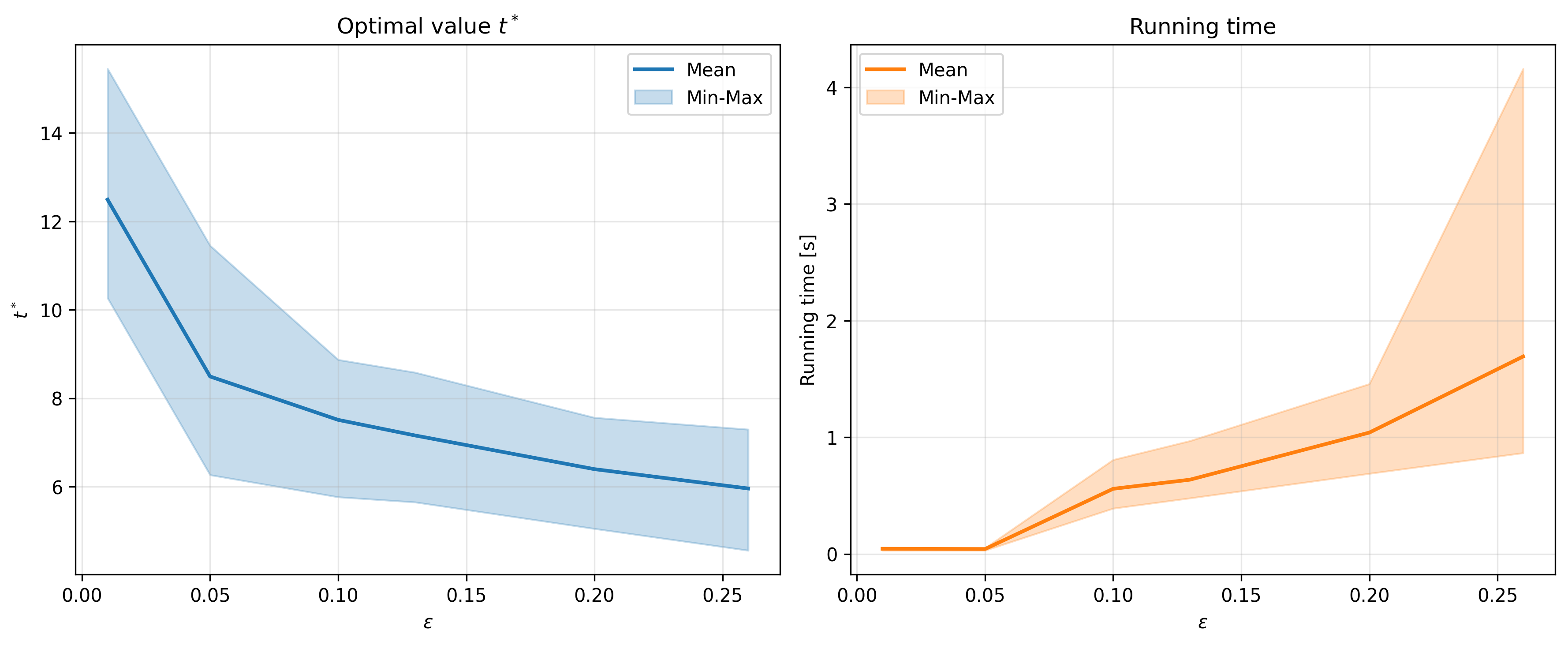}
\caption{Wishart Ensemble perturbation model with $\Qb_{\mathrm{nom}} = \Ob$. Optimal value $t^*$ and running time in seconds for different choices of $\varepsilon$, for fixed $\theta=0.05$, $\beta=5$, $M = 5$, $n = 10$, $N=20$.}\label{fig:Qnom=0-wishart-epsilon-seeds-sweep}
    \end{figure}

\begin{example}[Changing the Wasserstein radius $\theta$]\label{ex:Qnom=0-wishart_theta-seed-sweep} Same setup as in Example~\ref{ex:Qnom=0-wishart_epsilon-sweep} but for 20 different random number generator seeds. The Wasserstein radius $\theta \in [10^{-3},10]$ varies, but we leave $\varepsilon$, $\beta$, $M$, $n$, and $N$ constant as in Table~\ref{tab:baseline-parameters}. We compute the optimal value decision $\x^*$, the optimal value $t^*$, and the running time. Figure~\ref{fig:Qnom=0-wishart-theta-seeds-sweep} shows the optimal value and running time on average, as well as their min-max range. As $\theta$ increases, the optimal value $t^*$ increases and the running time decreases. Observe that no optimal value was found for larger values of $\theta > 0.5$. By Lemma~\ref{lem:subproblem}, problem \eqref{DRCCStQP_max_norm} is indeed infeasible for $\theta > \varepsilon M = 0.1 \cdot 5 = 0.5$. \\ \textbf{\underline{Bottom line:} Larger $\theta$ buys robustness at the price of conservatism, up to a hard feasibility ceiling $\theta=\varepsilon M$.}
 \end{example}

\begin{figure}[H]
        \centering
\includegraphics[width=\linewidth]{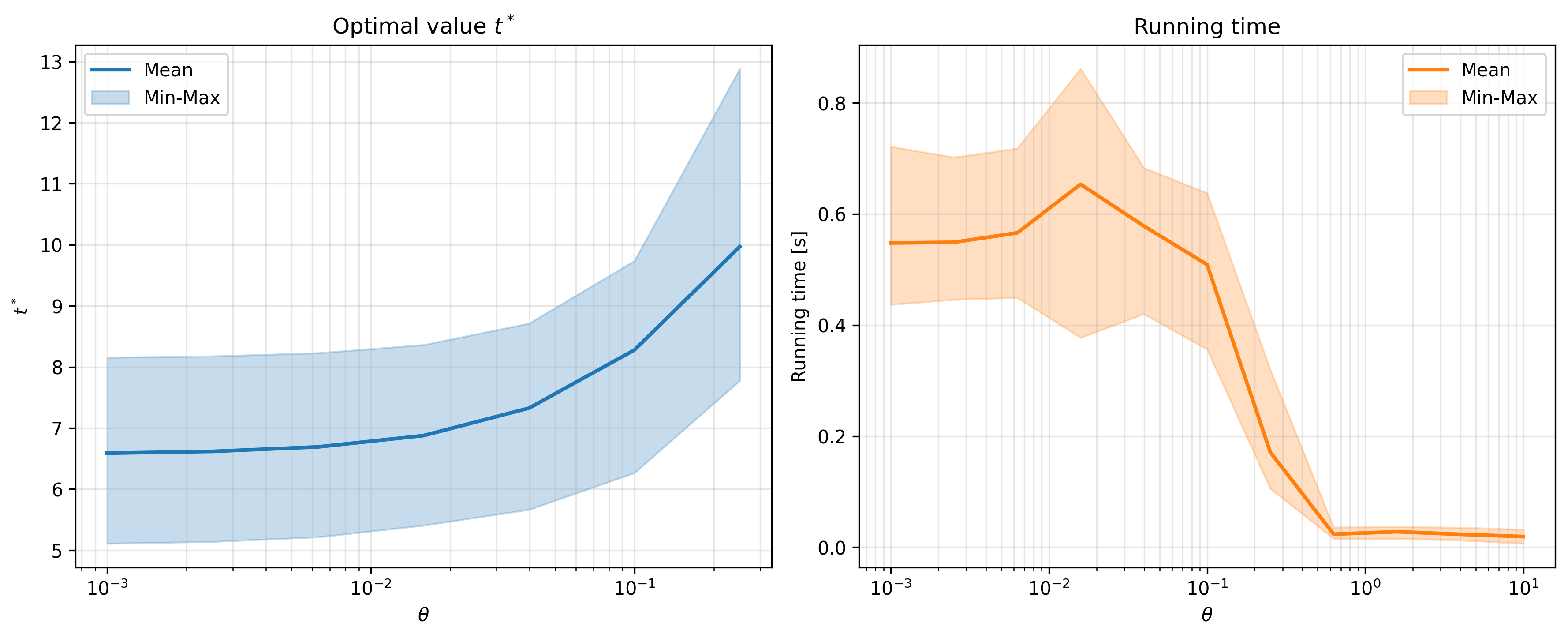}
\caption{Wishart Ensemble perturbation model with $\Qb_{\mathrm{nom}} = \Ob$. Optimal value $t^*$ and running time in seconds for different choices of $\theta$, for fixed $\varepsilon=0.1$, $\beta=5$, $M = 5$, $n = 10$, $N=20$.}\label{fig:Qnom=0-wishart-theta-seeds-sweep}
    \end{figure}

\noindent Now we will look at the out-of-sample performance.

\begin{example}[Out-of-sample performance]\label{ex:Qnom=0-wishart_theta_out-of-sample} Continuing the discussion of Example~\ref{ex:Qnom=0-wishart_theta-seed-sweep}, we compute for every $\theta$ and every seed, the optimal values of the in-sample solutions $(\x^*,t^*)$ and the in-sample empirical probability
$$
\widehat{p}_{\mathrm{in}}:=\frac{1}{N}\sum_{i=1}^N\mathbbm 1_{\{\x^{*\top}\widehat{\Qb}_i\x^*\leq t^*\}}\,.
$$
For every $\theta$ and every seed, the in-sample empirical probability $
\widehat{p}_{\mathrm{in}}$ was at least 95\%, sometimes even 100\%, in any case greater than the prescribed $1-\varepsilon = 90\%$ by the chance constraint in \eqref{DRCCStQP}. Then, we construct another analogous, unseen, i.i.d. sample $\{\widehat \Qb_i^{\prime}\}_{i=1}^{N^{\prime}}$, with $N^{\prime} = 10^5$ and evaluate the out-of-sample empirical probability
$$
\widehat{p}_{\mathrm{out}}:=\frac{1}{N^{\prime}}\sum_{i=1}^{N^{\prime}}\mathbbm 1_{\{\x^{*\top}\widehat{\Qb}_i^{\prime}\x^*\leq t^*\}}\,.
$$
Figure~\ref{fig:Qnom=0-wishart-out-of-sample-theta} displays the out-of-sample empirical probability (left) and the conservatism gap (right), which we define as $\widehat{p}_{\mathrm{in}}- \widehat{p}_{\mathrm{out}}$. Small values of $\theta$ perform poorly on the unseen data, with an out-of-sample empirical probability of $\widehat{p}_{\mathrm{out}}\approx 70\%$, well below the prescribed $90\%$, implying poor generalization. Less conservative/higher values of $\theta$ perform better. For $\theta \approx 90\%$ we even have $\widehat{p}_{\mathrm{out}} \approx 90\%$ on average. The conservatism gap and the min-max bandwidth also reduce with increasing $\theta$. It is remarkable that for one seed, the out-of-sample empirical probability $\widehat{p}_{\mathrm{out}}$ even outperformed the in-sample empirical probability $\widehat{p}_{\mathrm{in}}$. Values of $\theta$ close to the ceiling $\varepsilon M$ exhibit good generalization. This is examined in greater detail in Table~\ref{tab:out_of_sample_theta}. There, we show, for $\theta \in \{0.1, \dots, 0.5\}$ the out-of-sample empirical probabilities (mean $\pm$ (standard deviation) and the confidence $1-\eta$, which we define as the fraction of the seeds where the out-of-sample empirical probability $\widehat{p}_{\mathrm{out}}$ was at least the prescribed confidence level $1-\varepsilon$:
$$
\mathbb{P}_{\mathrm{true}}^N\left[\widehat{\mathbb P}_N\left[\x^{*\top}\widetilde\Qb\x\leq t^*\right]\geq 1-\varepsilon\right]=:1-\eta.
$$
A Wasserstein radius $\theta \geq 0.3$ already yields an out-of-sample empirical probability higher than 90\% on average; see Table~\ref{tab:out_of_sample_theta}. \\ \textbf{\underline{Bottom line:} In order to improve the out-of-sample performance, the Wasserstein radius $\theta$ should be as large as possible, but smaller than the hard feasibility ceiling $\theta = \varepsilon M$.}
\end{example}

 \begin{figure}[H]
        \centering
\includegraphics[width=\linewidth]{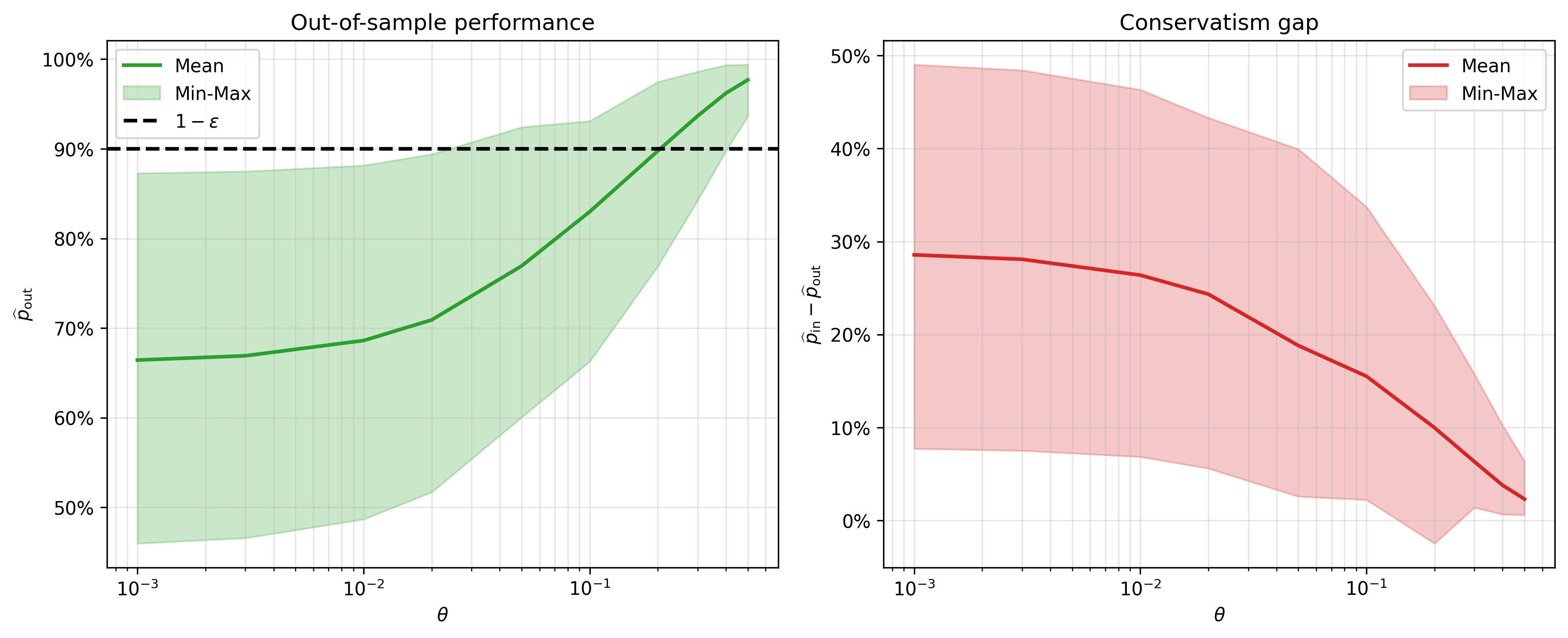}
\caption{Wishart Ensemble perturbation model with $\Qb_{\mathrm{nom}} = \Ob$. Out-of-sample performance and conservatism gap for different choices of $\theta$, for fixed $\varepsilon=0.1$, $\beta = 5$, $M = 5$, $n=10$ and $N=20$.}\label{fig:Qnom=0-wishart-out-of-sample-theta}
    \end{figure}

\begin{table}[H]
\centering
\begin{tabular}{|ccc|}
\hline
$\theta$ & $\widehat{p}_{\mathrm{out}}$ & $1-\eta$\\
\hline
0.1 & $83.0\%\pm(6.7\%)$ & 15\% \\
0.2 & $89.8\%\pm(5.2\%)$ & 45\% \\
0.3 & $93.7\%\pm(3.5\%)$ & 85\% \\
0.4 & $96.2\%\pm(2.3\%)$ & 95\% \\
0.5 & $97.7\%\pm(1.4\%)$ & 100\% \\
\hline
\end{tabular}
\caption{Out-of-sample performance for different $\theta$ over 20 independent runs. The out-of-sample empirical probability $\widehat{p}_{\mathrm{out}}$ is given as mean $\pm$ (standard deviation). The confidence $1-\eta$ is the fraction of seeds where the out-of-sample empirical probability $\widehat{p}_{\mathrm{out}}$ was at least the prescribed confidence level $1-\varepsilon = 90\%$.}
\label{tab:out_of_sample_theta}
\end{table}

\begin{example}[Changing the scale $\beta$]\label{ex:Qnom=0-wishart_n-beta-sweep} This uses the same setup as in Example~\ref{ex:Qnom=0-wishart_epsilon-sweep} but for 20 different random number generator seeds. The scale $\beta \in [2, 12]$ varies, but we leave $\varepsilon$, $\theta$, $M$, $n$, and $N$ constant as in Table~\ref{tab:baseline-parameters}. We compute the optimal value $t^*$ and the running time. Figure~\ref{fig:Qnom=0-wishart-beta-seeds-sweep} shows the optimal value and running time on average, as well as their min-max range. As $\beta$ increases, the optimal value $t^*$ increases linearly while the running time stays approximately the same, because the samples are positive definite. \\ 
\textbf{\underline{Bottom line:} Larger scale $\beta$ for positive definite instances implies larger quadratic forms, thus higher optimal values.}
\end{example}
  \begin{figure}[H]
        \centering
\includegraphics[width=\linewidth]{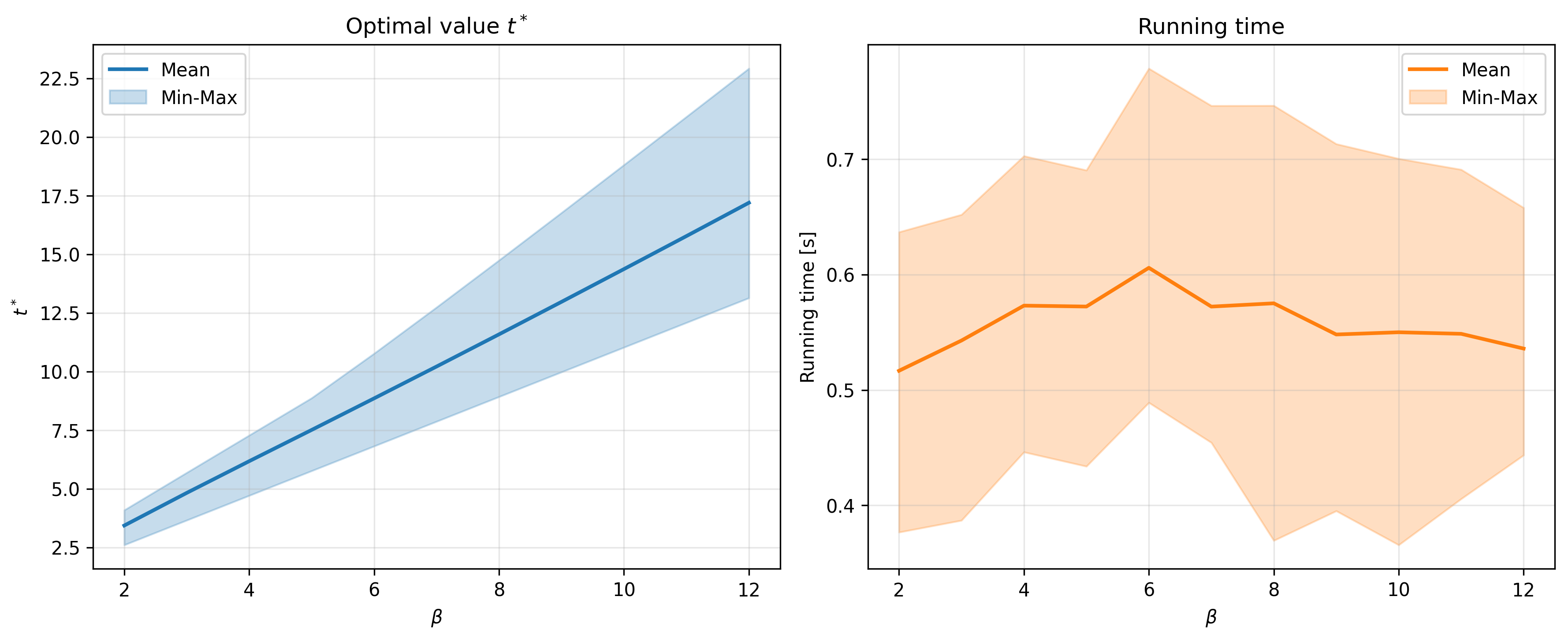}
\caption{Wishart Ensemble perturbation model with $\Qb_{\mathrm{nom}} = \Ob$. Optimal value $t^*$ and running time in seconds for different choices of $\beta$, for fixed $\varepsilon=0.1$, $\theta=0.05$, $M = 5$, $n=10$, $N=20$.}\label{fig:Qnom=0-wishart-beta-seeds-sweep}
    \end{figure}

\begin{example}[Changing the dimension size $n$]\label{ex:Qnom=0-wishart_n-seed-sweep} Same setup as in Example~\ref{ex:Qnom=0-wishart_epsilon-sweep} but for 20 different random number generator seeds. The dimension size $n \in \{5, 10, 15, 20\}$ varies, but we leave $\varepsilon$, $\theta$, $\beta$, $M$, and $N$ constant as in Table~\ref{tab:baseline-parameters}. We compute the optimal value $t^*$ and the running time. Figure~\ref{fig:Qnom=0-wishart-n-seeds-sweep} displays the optimal value and the running time as boxplots. The figure shows the interquartile range, median, and mean. The median and mean of the optimal value $t^*$ decrease as $n$ increases, and so do the interquartile range and whisker lengths, indicating less variability. The running time shows the opposite behavior: the median, mean, interquartile range, and whisker length increase. \\ \textbf{\underline{Bottom line:} Running time increases with the dimension $n$ and is still manageable for $n=20$, for positive definite instances.}
\end{example}
  \begin{figure}[H]
        \centering
\includegraphics[width=\linewidth]{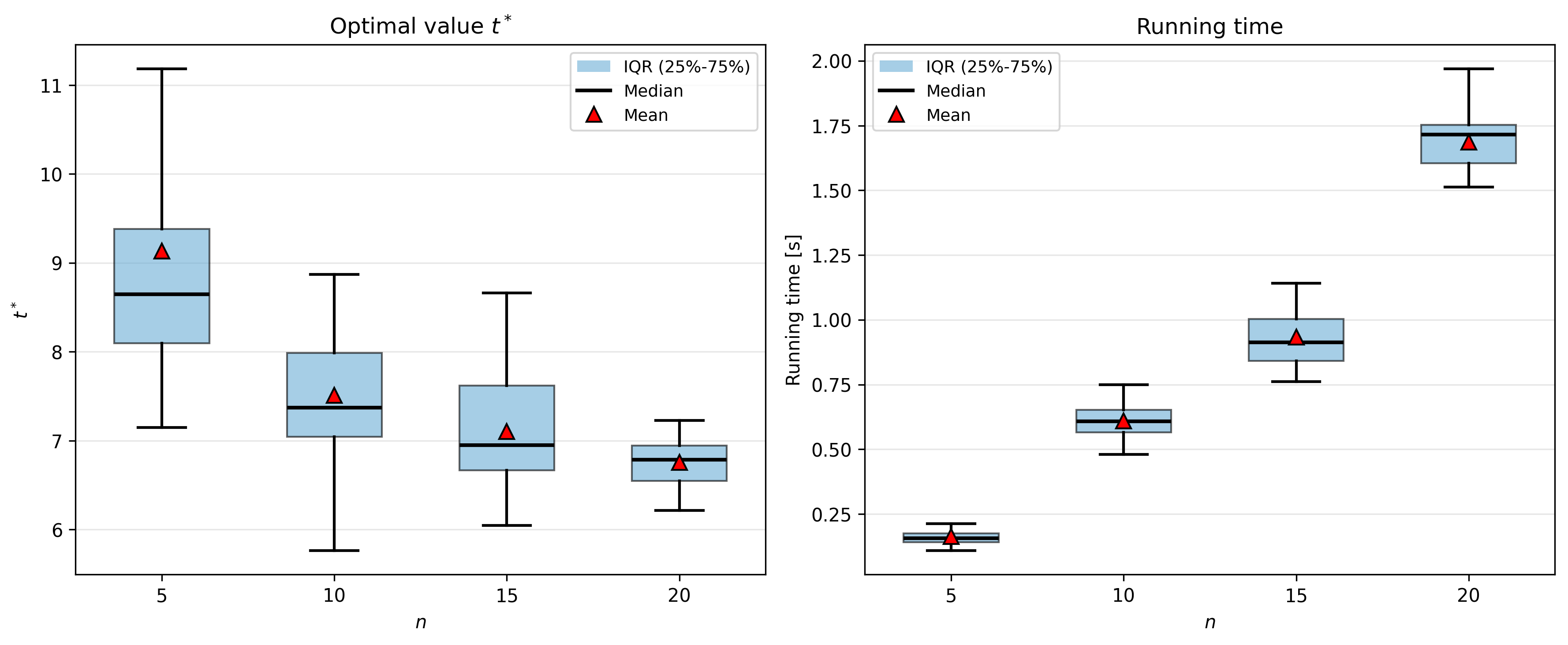}
\caption{Wishart Ensemble perturbation model with $\Qb_{\mathrm{nom}} = \Ob$. Optimal value $t^*$ and running time in seconds for different choices of $n$, for fixed $\varepsilon=0.1$, $\theta=0.05$, $\beta = 5$, $M = 5$, $N=20$.}\label{fig:Qnom=0-wishart-n-seeds-sweep}
    \end{figure}

\begin{example}[Changing the sample size $N$]\label{ex:Qnom=0-wishart_N-seed-sweep} Same setup as in Example~\ref{ex:Qnom=0-wishart_epsilon-sweep} but for 20 different random number generator seeds. The sample size $N \in \{15, 20, 25, 30\}$ varies, but we leave $\varepsilon$, $\theta$, $\beta$, $M$, and $n$ constant as in Table~\ref{tab:baseline-parameters}. We compute the optimal value $t^*$ and the running time. Figure~\ref{fig:Qnom=0-wishart-N-seeds-sweep} displays the optimal value and the running time as boxplots. The figure shows the interquartile range, median, and mean. The median and mean of the optimal value $t^*$ increase with increasing $N$. As expected, the running time increases with increasing sample size $N$, since an additional sample represents two additional constraints in the MIQP~\eqref{DRCCStQP_max_norm}. \\ \textbf{\underline{Bottom line:} Oversampling does not meaningfully change the optimal value and comes at the cost of increasing the running time.}
\end{example}
  \begin{figure}[H]
        \centering
\includegraphics[width=\linewidth]{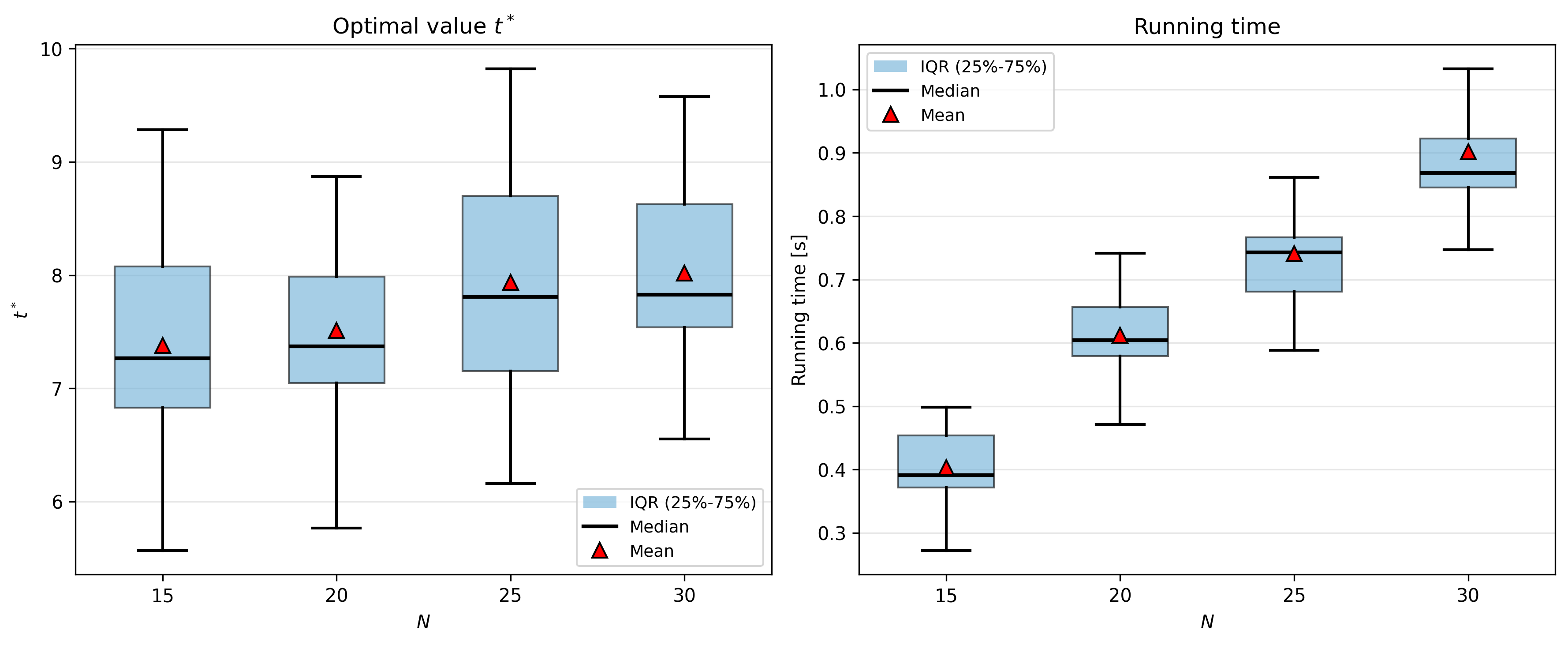}
\caption{Wishart Ensemble perturbation model with $\Qb_{\mathrm{nom}} = \Ob$. Optimal value $t^*$ and running time in seconds for different choices of $N$, for fixed $\varepsilon=0.1$, $\theta=0.05$, $\beta = 5$, $M = 5$, $n=10$.}\label{fig:Qnom=0-wishart-N-seeds-sweep}
\end{figure}

\subsubsection{Indefinite instances}\label{subsec:indefinite-instances}
\noindent Now, we will investigate the case of the GOE perturbation model \eqref{GOE-perturbation-model}. Suppose that $\mathbb P_{\mathrm{true}}$ is of the form $
\widetilde \Qb = \beta \widetilde \Gb$, i.e., a GOE perturbation model \eqref{GOE-perturbation-model} with $\Qb_{\mathrm{nom}} = \Ob$. By Lemma~\ref{lem:closed-form-VaR}, the problem of minimizing the Value-at-Risk of the random variable $\x^{\top}\widetilde\Qb\x$ at confidence level $1-\varepsilon$ has a closed form solution: 
$$
\operatorname{opt}=\min_{\x \in \Delta} \operatorname{VaR}_{1-\varepsilon, \mathbb P_{\mathrm{true}}}[\x^{\top}\widetilde\Qb\x] = \min_{\x \in \Delta} \tau \,\x^{\top}\x = \tau \min_{\x \in \Delta}  \x^{\top}\x = \frac{\tau}{n}\,, 
$$
where $\tau = \sqrt{2}\beta \Phi^{-1}(1-\varepsilon)$ and the minimizer is given by $\x^*=\tfrac{1}{n}\e$. 

\begin{example}[Baseline parameters]\label{ex:GOE-baseline}  For $n=10, \beta = 5$, and  $\varepsilon = 0.1$, we have that the optimal value of the minimization of the Value-at-Risk is equal to $\frac{\tau}{n} \approx 0.9062$. By Corollary~\ref{cor:DRCCStQP-is-supVaR}, \eqref{DRCCStQP} is the robustification of the Value-at-Risk around the empirical measure. If the radius $\theta$ is sufficiently large, we expect its optimal value to exceed $\tfrac{\tau}{n}$. We put $N = 20$ and generate a sample $\{\widehat \Gb_i\}_{i=1}^N\sim \operatorname{GOE}(n)$. We define $\widehat \Qb_i = \beta \widehat \Gb_i$, for $i\in [N]$, and put big-$M$ number $M = 5$. We take a small Wasserstein ball radius $\theta = 0.05$ and solve \eqref{DRCCStQP_max_norm}. In this case, we obtain $t^* \approx 0.8325$ (see Table~\ref{tab:GOE-epsilon}, third row). In this case, there are no {unsafe points}, two points are transported, and no point is mass split.
\end{example}

\begin{example} [Structure of the solution for different values of $\varepsilon$]\label{ex:Qnom_indef_GOE_epsilon-sweep} For Table~\ref{tab:GOE-epsilon}, we use the same setup as in Example~\ref{ex:Qnom=0-wishart_epsilon-sweep} but with $\widehat \Qb_i =\beta \widehat \Gb_i$, for $i\in [N]$. The $\widehat \Qb_i$ are indefinite with $\lambda_{\mathrm{min}}(\widehat \Qb_i) \in [-38.03,-19.05]$ and $\lambda_{\mathrm{max}}(\widehat \Qb_i) \in [21.68, 33.16]$, for $i \in [N]$. Compared to Example~\ref{ex:Qnom=0-wishart_epsilon-sweep}, the noise-free optimal values and $t^*$ are lower; $t^*$ even becomes negative. The running times are considerably longer, because the matrices $\widehat \Gb_i$ are indefinite. Each problem \eqref{DRCCStQP_max_norm} has $N=20$ nonconvex quadratic constraints. The sorting of the $\widehat{\Qb}_i$ remains also stable and corresponds to the (decreasing) sorting of $(\tfrac{1}{n}\e)^{\top}\widehat{\Qb}_i(\tfrac{1}{n}\e)$. Moreover, many indices appear in the solutions of both Examples (e.g., $\{3, 13, 14, 15, 16, 18, 19\}$). This is because the matrices $\widehat{\Wb}_i$ and $\widehat{\Gb}_i$ are sampled using the same random number generator seed. Since, $k=n$, we sampled $\widehat{\Nb}_i$ entrywise from the standard normal distribution $(\widehat{\Nb}_i)_{k,\ell} \sim \mathcal N(0,1)$ for $k, \ell \in [n]$ and then put $\widehat \Wb_i = \widehat \Nb_i \widehat \Nb_i^{\top}$, and $\widehat \Gb_i = \tfrac{1}{\sqrt{2}}(\widehat \Nb_i + \widehat \Nb_i^{\top})$, for $i \in [N]$. \\ \textbf{\underline{Bottom line:} Due to its combinatorial nature, for indefinite instances, running times increase substantially with increasing $\varepsilon$.}
\end{example}

\begin{table}[H]
\centering
\begin{tabular}{|ccccccc|}
\hline
$\varepsilon$ & opt & $t^*$ & $\mathcal I_{\mathrm{U}}$ & $\mathcal I_{\mathrm{T}}$ & $\mathcal I_{\mathrm{MS}}$ & running time\\
\hline
0.01 & 1.6450 & 5.3325 & $\{\}$ & $\{\}$ & $\{13\}$ & 2.08s\\
0.05 & 1.1631 & 1.3325 & $\{\}$ & $\{13\}$ & $\{\}$ & 2.14s\\
0.10 & 0.9062 & 0.8325 & $\{\}$ & $\{16, 1\}$ & $\{\}$ & 5.27s\\
0.13 & 0.7965 & 0.5827 & $\{13\}$ & $\{14\}$ & $\{16\}$ & 9.42s\\
0.20 & 0.5951 & -0.1152 & $\{3, 1, 16\}$ & $\{19\}$ & $\{\}$ & 20.56s\\
0.26 & 0.4549 & -0.6059 & $\{3, 1, 16\}$ & $\{19, 18\}$ & $\{13\}$ & 46.71s\\
\hline
\end{tabular}
\caption{GOE perturbation model with $\Qb_{\mathrm{nom}} = \Ob$. Structure of the solution for different values of $\varepsilon$, for fixed $n=10$, $N=20$, $\beta = 5$, $\theta = 0.05$, $M=5$.}
\label{tab:GOE-epsilon}
\end{table}

\begin{example} [Changing the scale $\beta$ and the dimension $n$]\label{ex:indef_changing_beta_and_n} 
\noindent Then, we performed the same numerical experiments as in the previous examples. The effects of varying the parameters on the GOE perturbation model remain the same as on the Wishart Ensemble perturbation model for almost all parameters, but for the scale $\beta$, as shown in Figure~\ref{fig:Qnom=0-goe-beta-seeds-sweep}. This is because the quadratic form $\beta\x^{\top}\widetilde
\Gb\x$ is a normal random variable with mean 0 and variance $2\beta^2\|\x\|_2^4$. Increasing $\beta$ does not change the mean, but it increases the variance, thus the dispersion shown in Figure~\ref{fig:Qnom=0-goe-beta-seeds-sweep}.

\noindent Notably, the running times explode as dimension $n$ increases. In Table~\ref{tab:n-Wishart-GOE-no-Qnom}, we compare the optimal values $t^*$ and running times for both models. The running time for GOE and $n=20$ ranged from 323s to 11500s. \texttt{Gurobi} found the global optimum in 78 out of the 80 indefinite instances, as shown by three statistics computed by \texttt{Gurobi}. \texttt{ObjBound} is the best theoretical lower bound on the global minimum, \texttt{ObjVal} is the objective value of the incumbent solution, thus an upper bound on the global minimum, and \texttt{Gap} is the relative optimality gap defined as $\mathtt{Gap} = \tfrac{|\mathtt{ObjVal}- \mathtt{ObjBound}|}{|\mathtt{ObjVal}|}$. In 78 out of the 80 instances, $\mathtt{ObjVal} = \mathtt{ObjBound}$, and $\mathtt{Gap} = 0$. For the other two cases, \texttt{Gurobi} returned a local solution after reaching a $\mathtt{Gap}$ smaller than the prescribed relative MIP optimality gap of $10^{-5}$.\\ 
\textbf{\underline{Bottom line:}  As expected, running time explodes for indefinite instances and increasing dimension.}
\end{example}

 \begin{figure}[H]
        \centering
\includegraphics[width=\linewidth]{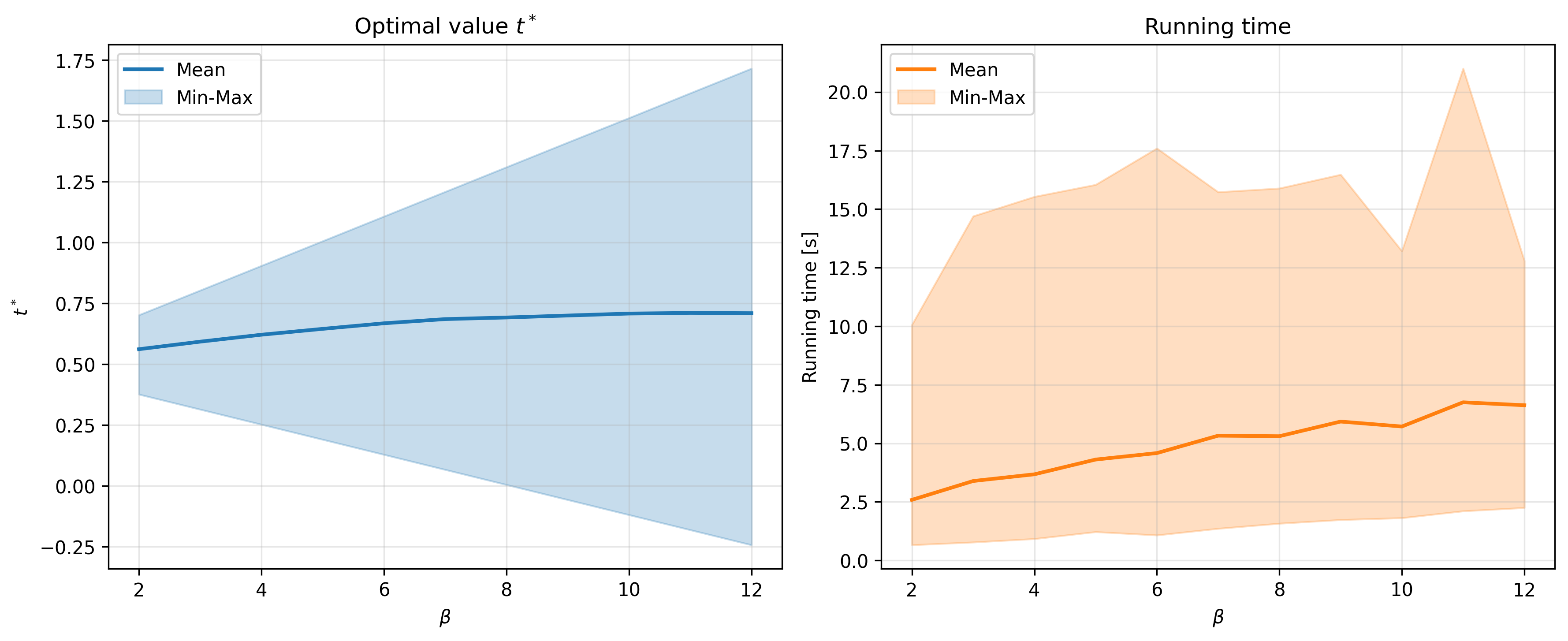}
\caption{GOE perturbation model with $\Qb_{\mathrm{nom}} = \Ob$. Optimal value $t^*$ and running time in seconds for different choices of $\beta$, for fixed $\varepsilon=0.1$, $\theta=0.05$, $M = 5$, $n=10$, $N=20$.}\label{fig:Qnom=0-goe-beta-seeds-sweep}
    \end{figure}

\begin{table}[H]
\centering
\begin{tabular}{|c|cc|cc|}
\hline
\multirow{2}{*}{$n$}
& \multicolumn{2}{c|}{$\Qb_{\mathrm{nom}} = \Ob$, $\{\widehat{\Qb}_i\}_{=1}^N\sim \mathcal W_n(\Ib,n)$}
& \multicolumn{2}{c|}{$\Qb_{\mathrm{nom}} = \Ob$,  $\{\widehat{\Qb}_i\}_{=1}^N\sim \operatorname{GOE}(n)$} \\
\cline{2-5}
& $t^*$
& running time
& $t^*$
& running time\\
\hline
$5$  & $9.1318 \pm (1.8993)$ & $0.16s \pm (0.04s)$ & $1.8048 \pm (0.5069)$ & $0.15s \pm (0.05s)$ \\
$10$ & $7.5102 \pm (0.8321)$ & $0.61s \pm (0.07s)$ & $0.6455 \pm (0.2068)$ & $4.12s \pm (3.62s)$\\
$15$ & $7.1050 \pm (0.7300)$ & $0.93s \pm (0.11s)$ & $0.3811 \pm (0.1867)$ & $177.60s \pm (126.48s)$\\
$20$ & $6.7551 \pm (0.2828)$ & $1.67s \pm (0.20s)$ & $0.0840 \pm (0.2442)$ & $3079.02s \pm (2383.28s)$\\
\hline
\end{tabular}
\caption{Optimal value $t^*$ and running times for Wishart Ensemble and GOE perturbation models with $\Qb_{\mathrm{nom}} = \Ob$, for different $n\in\{5,10,15,20\}$ and fixed $\varepsilon=0.1$, $\theta=0.05$, $M = 5$, $N=20$. The optimal value $t^*$ and the running time are given as mean $\pm$ (standard deviation) across the 20 seeds.}
\label{tab:n-Wishart-GOE-no-Qnom}
\end{table}

\subsection{With nonzero nominal matrix}
\subsubsection{Synthetic data}
\begin{example}[Synthetic nominal matrices] We repeated the experiments from the previous section with $\Qb_{\mathrm{nom}}\neq \Ob$ with positive definite nominal matrices for the Wishart Ensemble perturbation model and indefinite matrices for the GOE perturbation model. For that, we generated symmetric matrices $\Ub$ entrywise from the uniform distribution on $(0,1)$, then we put $\Qb_{\mathrm{nom}} =\Ub^{\top}\Ub$ in the Wishart Ensemble case and $\Qb_{\mathrm{nom}} =\Ub$ in the GOE case. Varying the parameters has the same effect. Table~\ref{tab:n-Wishart-GOE-with-Qnom} reports the same comparison of optimal values $t^*$ and running times across dimensions $n$ for both perturbation models, now with a nominal matrix $\Qb_{\mathrm{nom}} \neq \Ob$. As in the case with zero nominal matrix, the running times grow steeply with $n$ for the indefinite GOE instances, while the positive definite Wishart instances remain fast. \\ \textbf{\underline{Bottom line:} Even though the eigenvalues of the unscaled (even multiplied by $\beta$) matrices dominate the eigenvalues of nominal matrices, the running times are shorter than in the case with zero nominal matrix.}
\end{example}

\begin{table}[H]
\centering
\begin{tabular}{|c|cc|cc|}
\hline
\multirow{2}{*}{$n$}
& \multicolumn{2}{c|}{$\Qb_{\mathrm{nom}} \neq \Ob$, $\{\widehat{\Qb}_i\}_{=1}^N\sim \mathcal W_n(\Ib,n)$}
& \multicolumn{2}{c|}{$\Qb_{\mathrm{nom}} \neq \Ob$,  $\{\widehat{\Qb}_i\}_{=1}^N\sim \operatorname{GOE}(n)$} \\
\cline{2-5}
& $t^*$
& running time
& $t^*$
& running time\\
\hline
$5$  & $10.5202 \pm (1.8971)$ & $0.14s \pm (0.03s)$ & $2.3003 \pm (0.5305)$ & $0.17s \pm (0.05s)$ \\
$10$ & $10.2124 \pm (0.9221)$ & $0.54s \pm (0.08s)$ & $1.1557 \pm (0.2358)$ & $4.09s \pm (2.93s)$\\
$15$ & $10.8597 \pm (0.8105)$ & $0.84s \pm (0.07s)$ & $0.8591 \pm (0.1971)$ & $130.91s \pm (97.26s)$\\
$20$ & $11.7791 \pm (0.4053)$ & $1.41s \pm (0.21s)$ & $0.5817 \pm (0.2265)$ & $1179.66s \pm (1052.73s)$\\
\hline
\end{tabular}
\caption{Optimal value $t^*$ and running times for Wishart Ensemble and GOE perturbation models with $\Qb_{\mathrm{nom}} \neq \Ob$, for different $n\in\{5,10,15,20\}$ and fixed $\varepsilon=0.1$, $\theta=0.05$, $M = 5$, $N=20$. The optimal value $t^*$ and the running time are given as mean $\pm$ (standard deviation) across the 20 seeds.}
\label{tab:n-Wishart-GOE-with-Qnom}
\end{table}

\subsubsection{Real-world instances}

\noindent Now, we compare the solutions yielded by deterministic StQPs with those from DRCCStQPs as follows. It can be seen that \eqref{StQP} can also be written equivalently in epigraphic form
\begin{equation*}
\begin{aligned}
\underset{(\x, t) \in (\Delta \times \mathbb R)}{\min} &&& t\\
\mathrm{s.t.} &&& \x^{\top}\Qb\x \leq t\,.
\end{aligned}
\end{equation*}
The \eqref{DRCCStQP} enforces a chance constraint on the epigraphic constraint. We will consider three nominal matrices $\Qb_{\mathrm{nom}}$ and report the deterministic noise-free solution $\x^*_{\mathrm{nom}}$. Then, we construct samples based on those nominal matrices and solve the corresponding DRCCStQP to obtain $\x^*_{\mathrm{DRCC}}$. We then compare the performance of $\x^*_{\mathrm{nom}}$ and $\x^*_{\mathrm{DRCC}}$ on unseen data and report objective-value differences and empirical violation probabilities.

\begin{example}[Mathematical ecology]\label{ex:Bomze_DeKlerk} Consider the setting of \cite[Example~5.3]{bomze2002solving}: Mathematical model 410A from \cite{vickers1988patterns}, where the objective is to minimize $\x^{\top}\Qb_{\mathrm{nom}}\x$ subject to $\x \in \Delta$ with
$$
\Qb_{\mathrm{nom}} = -\left(
\begin{array}{ccccc}
14 & 15 & 16 & 1 & 0 \\
15 & 14 & 12.5 & 22.5 & 15 \\
16 & 12.5 & 10 & 26.5 & 16 \\
0 & 22.5 & 26.5 & 0 & 0 \\
0 & 15 & 16 & 0 & 14
\end{array}
\right).
$$
This is an indefinite matrix with $\lambda_{\mathrm{min}}(\Qb_{\mathrm{nom}}) = -63.1$, $\lambda_{\mathrm{max}}(\Qb_{\mathrm{nom}}) = 33.12$. The global solution is given by $\x_{\mathrm{nom}}^* = (0, \tfrac{1}{3}, \tfrac{1}{3}, \tfrac{1}{3}, 0)^{\top}$ and the optimal value is $\ell(\Qb_{\mathrm{nom}}) = -16\tfrac{1}{3}$. Now, we put $\varepsilon = 0.1$, $\theta = 0.05$, $\beta = 5$, $M = 5$ and $N=20$ as in Table~\ref{tab:baseline-parameters}. Then, we sample $\{\widehat{\Gb}_i\}_{i=1}^N\sim\operatorname{GOE}(n)$ and put $\widehat{\Qb}_i = \Qb_{\mathrm{nom}} + \beta \widehat{\Gb}_i$. The DRCC solution is $\x^*_{\mathrm{DRCC}}\approx (0, 0.21, 0.35, 0.44, 0)^{\top}$, shifting weight from the second entry to the fourth entry. Compared to the nominal case, it yields an objective value of $(\x^*_{\mathrm{DRCC}})^{\top}\Qb_{\mathrm{nom}}\x^*_{\mathrm{DRCC}} \approx -15.97$, which means a relative increase in the objective value of $2.22\%$. We construct an i.i.d. unseen sample $\widehat{\Qb}_i^{\prime}$ with $N = 10000$ and compute the violation probabilities. For the DRCC solution, we obtain a violation probability of $7.25\%$. We compute the worst-case distribution $\mathbb P^*$ according to \eqref{eqn:optimal-transport-plan} and report the quadratic forms $f(\x,\Qb) = \x^{\top}\Qb\x$ on Table~\ref{tab:real-world-math-model_n=5}. The nominal solution performs better on the nominal objective, while the DRCC solution performs better on the other objectives. \\ \textbf{\underline{Bottom line:} Adding the DRCC increases the nominal cost, but reduces the out-of-sample constraint violation probability}. 
\end{example}

\begin{table}[H]
\centering
\begin{tabular}{|c|ccc|}
\hline
$f(\x,\Qb), \text{for } \Qb =$ & $\Qb_{\mathrm{nom}}$ & $\mathbb E_{\widehat{\mathbb P}_N}[\widetilde \Qb]$ & $\mathbb E_{\mathbb P^*}[\widetilde \Qb]$\\
\hline
$\x = \x_{\mathrm{nom}}$ & $-16.33$ &  $-16.57$& $-16.52$\\
$\x = \x_{\mathrm{DRCC}}$ & $-15.97$ &  $-16.71$& $-16.66$\\
\hline
\end{tabular}
\caption{Evaluations of the quadratic form $f(\x,\Qb) = \x^{\top}\Qb\x$ for different matrices and solution vectors in Example~\ref{ex:Bomze_DeKlerk}.}
\label{tab:real-world-math-model_n=5}
\end{table}

\begin{example}[Minimum variance portfolio]\label{ex:mvp} Consider the following minimum variance portfolio optimization problem, which can be cast as an StQP: $\min_{\x \in \Delta} \x^{\top}\sSigma\x$. Here, $\sSigma$ is the covariance matrix of stocks and the variables represent the fraction of the portfolio allocated to a stock, which must sum to one. Note that in this model, short-selling is not allowed. We took the  S\&P 500 Top 10 companies by market capitalization as of September 17, 2026. These companies accounted for approximately 38\% of the total market capitalization. Using the market data from \texttt{Yahoo! Finance's API} freely available in \texttt{Python}, we obtained the daily closing prices for those companies from January 1, 2015 to December 31, 2024. All $n=10$ companies were active during this period. We used the periods 2015--2022 for training and the periods 2023--2024 for testing, which resulted in 2013 observations for training and 
502 observations for testing. Following~\cite{ledoit2004well}, we computed for the training period 2015--2022 the sample covariance matrix $\Sb$, the shrinkage target $\Fb = \tfrac{\operatorname{tr}(\Sb)}{n}\Ib$ and the shrinkage intensity $\delta$ and put $\sSigma = (1-\delta) \Sb + \delta \Fb$. In our case, $\delta \approx 0.8\%$. The matrix $\sSigma$ was positive definite with $\lambda_{\mathrm{min}}(\sSigma) \approx 6\cdot 10^{-6}$ and $\lambda_{\mathrm{max}}(\sSigma) \approx 3\cdot 10^{-3}$. For any eigenpair $(\lambda_i, \v_i)$ of $\sSigma$ we have $\|\sSigma\v_i - \lambda_i \v_i\|_2 \leq 10^{-17}$. We put $\Qb_{\mathrm{nom}} = 10^4\cdot\sSigma$ and solve the StQP with global solution given by $\x^*_{\mathrm{nom}}$ and optimal value $\ell(\Qb_{\mathrm{nom}}) = 1.5581$. We aim to minimize the Value-at-Risk of the stock portfolio variance. We put $\varepsilon = 0.1$, $\theta = 0.1$, $\beta = 5$, $M = 5$ and $N=20$. Then, we sample $\{\widehat{\Gb}_i\}_{i=1}^N\sim\operatorname{GOE}(n)$ and put $\widehat{\Qb}_i = \Qb_{\mathrm{nom}} + \beta \widehat{\Gb}_i$. Compared to the nominal case, the DRCC solution $\x^*_{\mathrm{DRCC}}$ yields an objective value of $(\x^*_{\mathrm{DRCC}})^{\top}\Qb_{\mathrm{nom}}\x^*_{\mathrm{DRCC}} \approx 2.3473$, which means a relative increase in the objective value of $50.66\%$. The nominal solution puts most of the weight on the ticker ``BRK-B," whereas the DRCC solution favors a more balanced portfolio; see Table~\ref{tab:tickers}. We build a nominal matrix $\Qb_{\mathrm{nom, test}}$ using the testing periods 2023--2024. From that, we construct an i.i.d. unseen sample $\widehat{\Qb}_i^{\prime}$ with $N = 10000$  and compute the violation probabilities. For the nominal solution, we obtain a violation probability on the unseen periods of $42.91\%$, whereas the DRCC solution achieves a violation probability of $4.64\%$. We compute the worst-case distribution $\mathbb P^*$ according to \eqref{eqn:optimal-transport-plan} and report the quadratic forms $f(\x,\Qb) = \x^{\top}\Qb\x$ on Table~\ref{tab:real-world-math-model}. The nominal solution performs better on all objective functions, at the cost of poor generalisation and violation probabilities.
\end{example}

\begin{table}[H]
\centering
\begin{tabular}{|lrr|}
\hline
 Ticker & $\x_{\mathrm{nom}}$ & $\x_{\mathrm{DRCC}}$\\
\hline
AAPL & 2.78\% & 26.96\% \\
AMZN & 10.48\% & 12.07\% \\
AVGO & 0.00\% & 6.14\% \\
BRK-B & 76.97\% & 10.52\% \\
GOOG & 1.24\% & 0.00\% \\
GOOGL & 8.27\% & 29.22\% \\
META & 0.26\% & 1.49\% \\
MSFT & 0.00\% & 10.79\% \\
NVDA & 0.00\% & 0.00\% \\
TSLA & 0.00\% & 2.80\% \\
\hline
\end{tabular}
\caption{Portfolio weights for nominal and DRCC solutions for S\&P 500 Top 10 companies in Example~\ref{ex:mvp}.}
\label{tab:tickers}
\end{table}

\begin{table}[H]
\centering
\begin{tabular}{|c|cccc|}
\hline
$f(\x,\Qb), \text{for } \Qb =$ & $\Qb_{\mathrm{nom}}$ & $\mathbb E_{\widehat{\mathbb P}_N}[\widetilde \Qb]$ & $\mathbb E_{\mathbb P^*}[\widetilde \Qb]$& $\Qb_{\mathrm{nom, test}}$\\
\hline
$\x = \x_{\mathrm{nom}}$& $1.56$ & $1.02$ & $1.12$ & $0.77$\\
$\x = \x_{\mathrm{DRCC}}$ & $2.35$ & $1.73$& $1.83$ & $1.48$\\
\hline
\end{tabular}
\caption{Evaluations of the quadratic form $f(\x,\Qb) = \x^{\top}\Qb\x$ for different matrices and solution vectors in Example~\ref{ex:mvp}.}
\label{tab:real-world-math-model}
\end{table}

\section*{Conclusion}

\noindent In this paper, we discussed the DRCCStQP under Wasserstein ambiguity, which is a quadratic DRCCP featuring an uncertain data matrix. We established a connection between the CCStQP and the DRCCStQP: the CCStQP is the Value-at-Risk of the random variable $\x^{\top}\widetilde\Qb\x$, and the DRCCStQP is its robustified version: the worst-case Value-at-Risk. We demonstrated that this nonconvex problem admits an exact reformulation as a mixed-integer quadratic optimization problem for a special choice of matrix norm. In particular, the DRCCStQP can also be reformulated as a deterministic StQP for a specific setting. In addition, we established the existence of a hard feasibility ceiling for the Wasserstein radius. Finally, we investigated the numerical performance and the dependence of different model parameters on the optimal value and the running time. Empirical findings were mostly in line with theory. However, surprisingly, positive-definite instances allow for more unsafe points than indefinite instances, and running times are shorter when the nominal matrix is nonzero.

\section*{Acknowledgements}
\noindent The authors thank Abdel Lisser, Heng Zhang, Ralf Werner and Francesca Maggioni for stimulating discussions and valuable suggestions. The authors are indebted to VGSCO and PGMO for financial support that enabled the presentation of this work at various major conferences: ECSO-CSM 2024, PGMODays (2024 and 2025), and ICCOPT 2025. The authors benefited from feedback from delegates at these meetings, which helped improve the quality of this paper. Furthermore, VGSCO support enabled S.~Küçükyavuz to visit the University of Vienna, making this collaboration possible. S.~Küçükyavuz is supported, in part, by ONR Grant \#N00014-22-1-2602.

\section*{Conflict of interest}

\noindent The authors declare no conflict of interest.

\bibliography{refs}
\end{document}